\theoremstyle{plain}
\newtheorem{thm}{Theorem}[section]
\newtheorem{lem}[thm]{Lemma}
\newtheorem{prop}[thm]{Proposition}
\newtheorem{ex}[thm]{Example}
\newtheorem{de}[thm]{Definition}
\begin{document}

\title{The spectral eigenvalues of a class of  product-form self-similar spectral measure}

\author{Xiao-Yu Yan}
\author{{Wen-Hui Ai}$^{~\mathbf{*}}$}

\address{Key Laboratory of Computing and Stochastic Mathematics (Ministry of Education),
School of Mathematics and Statistics, Hunan Normal University, Changsha, Hunan 410081, P.R. China}
\email{xyyan1103@163.com}
\email{awhxyz123@163.com}

%\date {\today}
\subjclass[2020]{28A80, 42C05.}
\keywords {Self-similar measure, spectral measure, product-form digit set, spectral eigenvalue problems.}
\thanks{The research is supported in part by the NNSF of China (Nos.12371072 and 12201206), the Hunan Provincial NSF (No. 2024JJ6301).\\
$^*$Corresponding author.}

\begin{abstract}
Let $\mu_{M,D}$ be the self-similar measure generated by the positive integer $M=RN^q$ and the product-form digit set $D=\{0,1,\dots,N-1\}\oplus N^{p_1}\{0,1,\dots,N-1\}\oplus \cdots \oplus N^{p_s}\{0,1,\dots,N-1\}$, where $R>1$, $N>1$, $q$, $p_i(1\leq i\leq s)$ are positive integers with $gcd(R,N)=1$ and $p_1<p_2<\cdots<p_s<q$. In this paper, we first show that $\mu_{M,D}$ is a spectral measure with a model spectrum $\Lambda$. Then we completely settle two types of spectral eigenvalue problems for  $\mu_{M,D}$. On the first case, for a real $t$, we give a necessary and sufficient condition under which $t\Lambda$ is also a spectrum of $\mu_{M,D}$. On the second case, we characterize all possible real numbers $t$ such that there exists a countable set $\Lambda'\subset \mathbb{R}$ such that $\Lambda'$ and $t\Lambda'$ are both spectra of $\mu_{M,D}$.
\end{abstract}

\maketitle

\section{Introduction}
\numberwithin{equation}{section}

\maketitle

Let $\mu$ be a compactly supported Borel probability measure on $\mathbb{R}.$ We say that $\mu$ is a spectral measure if there exists a countable discrete set $\Lambda\subset\mathbb{R}$ such that the set of exponential functions $E(\Lambda):=\{e^{2\pi i\lambda x}:\lambda\in\Lambda\}$ forms an orthonormal basis for $L^2(\mu)$, and then $\Lambda$ is called a spectrum of $\mu$. If a spectral measure $\mu$ is the Lebesgue measure on a measurable set $\Omega,$ then we say that $\Omega$ is a spectral set.

The  research of spectral measures was originated from Fuglede \cite{F74}, whose famous conjecture asserted that the normalized Lebesgue measure restricting on a domain $\Omega \subset \mathbb{R}^d$ is a spectral measure if and only if $\Omega$ is a translational tile, that is, there exists a discrete set $\Gamma\subset \mathbb{R}^d$ such that $\bigcup_{\gamma\in\Gamma}(\Omega+\gamma)$ covers $\mathbb{R}^d$ without overlaps and up to a zero set of Lebesgue measure. The conjecture was disproved eventually by Tao and others in the cases of three and higher dimensions \cite{KM2006,KM06,T04}. However, the situation in dimensions $d\leq 2$ is much less understood. In 1998, Jorgensen and Pedersen \cite{JP98} studied the spectral property of the self-similar measures and constructed the first singular, non-atomic spectral measure $\mu_{4,\{0,2\}}$. Their construction is based on a scale 4 Cantor set, where the first and third intervals are kept and the other two are discarded, i.e., $\mu_{4,\{0,2\}}$ is the standard one-fourth Cantor measure. In the same paper, they also showed that $\Lambda_{4,2}$ is a spectrum of $\mu_{4,\{0,2\}}$, where
\begin{eqnarray}\label{1.1}
\Lambda_{4,2}=\{0,1\}+4\{0,1\}+4^2\{0,1\}+\cdots\quad(\text{all finite sums}).
\end{eqnarray}

Jorgensen and Pedersen's example opened up a new area of research and many other examples of singular measures which admit orthonormal bases have been constructed including self-similar/self-affine/Moran fractal measures (see \cite{S00,LW02,D12,DHL13,DHL14,AH14,DL15,DHL19,AFL19,LMW22} and so on).
At the same time, many differences between singular spectral measures and absolutely continuous spectral measures have been discovered. For example, a typical difference is that there is only one spectrum $\Lambda=\mathbb{Z}$ containing 0 for $L^2([0,1]),$ while there are uncountable spectra (up to translations) for $L^2(\mu_{4,\{0,2\}})$ \cite{DHS09,FHW18}. Moreover, for the spectrum $\Lambda_{4,2}$ given by \eqref{1.1}, there exist infinitely many $t$ such that each $t\Lambda_{4,2}$ is also a spectrum of $\mu_{4,\{0,2\}}$ \cite{DH16,DJ12}. Motivated by the exotic facts, spectral eigenvalue problem was widely concerned and was proposed by Fu, He and Wen more systematically \cite{FHW18}.

\textbf{Spectral eigenmatrix problems.} For a given singular spectral measure $\mu$, generally speaking, there are two basic problems as follows.

\textbf{Problem 1.} Let $\Lambda$ be a spectrum of $\mu.$ Find all real matrices $R$ (or real numbers $r$) such that $R\Lambda$ (or $r\Lambda$) is also a spectrum of $\mu.$ In this case, we call $R$ (or $r$) a spectral eigenmatrix (or eigenvalue) of the first type of $\mu$.

\textbf{Problem 2.} Find all real matrices $R$ (or real numbers $r$) for which there exists a set $\Lambda$ such that both $\Lambda$ and $R\Lambda$ (or $r\Lambda$) are spectra of $\mu.$ In this case, we call $R$ (or $r$) a spectral eigenmatrix (or eigenvalue) of the second type of $\mu.$

In $\mathbb{R}$, there are several authors studying the spectral eigenvalue problem of $\mu_{4,\{0,2\}}$ with respect to the spectrum $\Lambda_{4,2}$ \cite{Dai16,DH16,DJ12,JKS11,LW02}. For example, Dutkay et al \cite{DJ12} proved that $5^k\Lambda_{4,2}$ is a spectrum of $\mu_{4,\{0,2\}}$ for all $k\in\mathbb{N}$. However, it is difficult to express or characterize all the eigenvalues $r$ clearly even in this simplest case. He et al \cite{HTW19,W19,LL21,LLW22} concentrated on the spectral eigenvalue problems of $\mu_{M, D}$ with cardinality $\#D=3$. Recently, many people (see \cite{A21,Dai16,LX17,FH17,FHW18,WDA19,WW20,W19,WZ18,ZAC23,CLL23}) considered the spectral eigenvalue problems of self-similar measures with consecutive digits $\{0,1,2,\dots,N-1\}(N\in\mathbb{N})$. Li, Wu \cite{LW22} and Lu et al \cite{LWZ23} first give the spectral eigenvalues of self-similar measures with product-form digits. On the other hand, there are few results of spectral eigenmatrix problems in dimension two and higher (see \cite{ADH22,LA23,LTW23,CL23}).

The importance of the spectral eigenvalue problems is that it is closely related to the convergence problem of Fourier expansion \cite{S00,S06,DHS14,LMW22,FTW22,PA23}. For example, Strichartz \cite{S00,S06} proved that for any continuous function $f$ on $\mathbb{R}$, its Fourier expansion with respect to the model spectrum $\Lambda_{4,2}$ in \eqref{1.1} of $\mu_{4,\{0,2\}}$ is uniformly convergent; But Dutkay et al \cite{DHS14} showed that there exists a continuous function $f$ on $\mathbb{R}$ whose Fourier expansion with respect to the spectrum $17\Lambda_{4,2}$ of $\mu_{4,\{0,2\}}$ is divergent at 0. Moreover, Jorgensen et al \cite{JKS14} found a connection between spectral eigenvalue problem and ergodic theory.

In 2022, Li and Wu \cite{LW22} studied the spectral eigenvalue problems of the self-similar measure $\mu_{R,B}$ generated by the integer $R=N^q\geq2$ and the product-form digit set $B=\{0,1,\dots,N-1\}\oplus N^p \{0,1,\dots,N-1\}$, where the integers $q>p\geq1$ and $N\geq2$. They completely settle two types of spectral eigenvalue problems for $\mu_{R,B}$. Motivated by their work, we focus on the spectrality and spectral eigenvalue problems of a class of self-similar measures $\mu_{M,D}$ generated by the positive integer $M=RN^q\geq2$ and the product-form digit set
$$D=\{0,1,\dots,N-1\}\oplus N^{p_1}\{0,1,\dots,N-1\}\oplus \cdots \oplus N^{p_s}\{0,1,\dots,N-1\}.$$
Define the iterated function system (IFS)
$$\tau_d(x)=M^{-1}(x+d) \ (x\in \mathbb{R},\ d\in D).$$
By Hutchinson's theorem \cite{H81}, there exists a unique Borel probability measure $\mu_{M,D}$ with compact support $T_{M,D}$ such that
\begin{eqnarray}\label{1.2}
\mu_{M,D}(E)=\frac{1}{\#D} \sum_{d\in D} \mu_{M,D}(\tau^{-1}_d(E))
\end{eqnarray}
for any Borel set $E\subset \mathbb{R}$ and $T_{M,D}$ is the unique compact set satisfying
\begin{eqnarray*}
T_{M,D}=\bigcup_{d\in D}\tau_d(T_{M,D}).
\end{eqnarray*}

Our main results of this paper are as follows.

\begin{thm}\label{T1.1}
Let $M=RN^q$ and $D=\{0,1,\dots,N-1\}\oplus N^{p_1}\{0,1,\dots,N-1\}\oplus \cdots \oplus N^{p_s}\{0,1,\dots,N-1\}$, where $R>1$, $N>1$, $q$, $p_i$ are positive integers  for all $1\leq i\leq s$ with $gcd(R,N)=1$ and $p_1<p_2<\cdots<p_s<q$. Then the self-similar measure $\mu_{M,D}$ defined as in \eqref{1.2} is a spectral measure with a spectrum
\begin{eqnarray}\label{1.3}
\Lambda=\left\{\sum_{j=1}^{k}(RN^q)^{j-1}l_j:k\geq1 \text{ and all }  l_j\in{L}\right\},
\end{eqnarray}
where
$$L=RN^q(\frac{1}{N}\{0,1,\dots,N-1\}\oplus\frac{1}{N^{p_1+1}}\{0,1,\dots,N-1\}\oplus\cdots\oplus\frac{1}{N^{p_s+1}}\{0,1,\dots,N-1\}).$$
\end{thm}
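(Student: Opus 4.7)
The plan is to verify that $(M^{-1}D, L)$ is a Hadamard (compatible) pair and then apply the Jorgensen--Pedersen criterion to conclude that $\Lambda$ is a spectrum. Setting $D_0:=\{0,1,\dots,N-1\}$ and $p_0:=0$, the direct-sum structures factor the masks as
\[
m_D(\xi) = \prod_{i=0}^{s} m_{D_0}(N^{p_i}\xi), \qquad m_L(\xi) = \prod_{i=0}^{s} m_{D_0}(RN^{q-p_i-1}\xi),
\]
where $m_{D_0}(\xi)=\frac{1}{N}\sum_{k=0}^{N-1}e^{2\pi ik\xi}$ vanishes precisely on $\frac{1}{N}\mathbb{Z}\setminus\mathbb{Z}$; note also that $L\subset\mathbb{Z}$ since $p_s<q$.

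The central vanishing lemma is $m_D(\ell/M)=0$ for every $\ell\in(L-L)\setminus\{0\}$. Writing $\ell = \sum_{j=0}^{s} RN^{q-p_j-1}c_j$ with $c_j\in\{-(N-1),\dots,N-1\}$ not all zero, I choose $i^* := \max\{j:c_j\neq 0\}$ and compute
\[
\frac{N^{p_{i^*}}\ell}{M} = \sum_{j<i^*} c_j N^{p_{i^*}-p_j-1} + \frac{c_{i^*}}{N} \equiv \frac{c_{i^*}}{N}\pmod 1,
\]
since the terms with $j>i^*$ vanish and those with $j<i^*$ are integers. As $0<|c_{i^*}|<N$, the factor $m_{D_0}(N^{p_{i^*}}\ell/M)$ is zero. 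The analogous argument (swapping the roles of $D$ and $L$ and using $\min$ in place of $\max$) yields $m_L(\delta/M)=0$ for every $\delta\in(D-D)\setminus\{0\}$, so the square matrix $H:=N^{-(s+1)/2}\bigl[e^{2\pi i\ell d/M}\bigr]_{\ell\in L,\,d\in D}$ is unitary and $(M^{-1}D,L)$ is a compatible pair.

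Orthogonality of $E(\Lambda)$ then follows at once: for distinct $\lambda,\lambda'\in\Lambda$ with $\lambda-\lambda'=\sum_{j\geq 1} M^{j-1}(l_j-l_j')$ and $k$ the smallest index where $l_k\neq l_k'$, the factor $m_D\bigl((\lambda-\lambda')/M^k\bigr)$ appearing in the infinite product for $\hat\mu_{M,D}(\lambda-\lambda')$ reduces, thanks to $L,D\subset\mathbb{Z}$ and $M\in\mathbb{Z}$, to $m_D\bigl((l_k-l_k')/M\bigr)=0$. For completeness, I verify the Jorgensen--Pedersen criterion $Q_\Lambda(\xi):=\sum_{\lambda\in\Lambda}|\hat\mu_{M,D}(\xi+\lambda)|^2\equiv 1$: unitarity of $H$ delivers the partition identity $\sum_{\ell\in L}|m_D((\xi+\ell)/M)|^2=1$, which combined with $\hat\mu_{M,D}(\xi)=m_D(\xi/M)\hat\mu_{M,D}(\xi/M)$ and iterated $n$ times yields $Q_{\Lambda_n}(\xi)\to 1$ pointwise. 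This mirrors the strategy of Li--Wu \cite{LW22} for the $s=1$ case and extends verbatim to general $s$ once the product-form arithmetic of Step~2 is in place.

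The main obstacle is not the Hadamard-pair verification, which is clean algebra once the nested $N$-adic factorisation is unfolded, but the completeness step: one must exclude the ``non-spectral'' scenario where $Q_{\Lambda_n}$ converges to a sub-unit limit, i.e.\ where a non-trivial cycle of the dual map $x\mapsto(x+\ell)/M$ lives inside the zero set of $\hat\mu_{M,D}$. In the present setting this no-cycle condition is forced by the expansiveness of $M$, the coprimality $\gcd(R,N)=1$, and the explicit form of $L$, along exactly the same lines as in \cite{LW22}.
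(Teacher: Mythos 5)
Your Hadamard-pair computation is correct and is, in substance, the paper's own Lemma 2.6: the factorization $m_D(\xi)=\prod_{i=0}^{s}m_{D_0}(N^{p_i}\xi)$ combined with your choice of the maximal index $i^*$ with $c_{i^*}\neq 0$ is exactly the paper's case analysis (its Cases $1$ through $s+1$ enumerate that same maximal index), and your overall architecture --- compatible pair first, then a completeness criterion --- is also the paper's. The difference is in how completeness is closed. The paper invokes {\L}aba--Wang (\cite[Theorem 1.2]{LW02}, its Theorem 2.3): once $(M^{-1}D,L)$ is compatible, $\gcd(D-D)=1$ and $0\in L\subseteq[2-M,M-2]$, that theorem directly yields that $\Lambda$ is a spectrum. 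You instead re-derive completeness through the $Q$-function and reduce it to the absence of nontrivial cycles of the dual maps $x\mapsto(x+\ell)/M$, $\ell\in L$, inside the zero set of $\hat{\mu}_{M,D}$.

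That reduction is where your proof has its one genuine gap: the closing claim that the no-cycle condition ``is forced by the expansiveness of $M$, the coprimality $\gcd(R,N)=1$, and the explicit form of $L$'' is asserted, not proved, and it is precisely the step on which the theorem rests (the coprimality of $R$ and $N$ is in fact irrelevant to it). The missing verification is short, and it is the same estimate that legitimizes the paper's own ``easy to check'' appeal to Theorem 2.3: since $1\leq p_1<\cdots<p_s\leq q-1$ forces $p_i\geq i$ and $s+1\leq q$, one has
$$\max L=RN^q\Bigl(\frac{N-1}{N}+\sum_{i=1}^{s}\frac{N-1}{N^{p_i+1}}\Bigr)\leq RN^q\bigl(1-N^{-(s+1)}\bigr)=M-RN^{q-s-1}\leq M-2,$$
using $R\geq 2$. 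Hence $L\subseteq[0,M-2]$, so the attractor $T(M,L)=\bigl\{\sum_{j\geq 1}M^{-j}c_j:c_j\in L\bigr\}$ is contained in $\bigl[0,\max L/(M-1)\bigr]\subset[0,1)$ and contains no nonzero integer; since every member $\eta_j$ of a cycle $\eta_{j+1}=(\eta_j+s_j^*)/M$ of nonzero integers would, by iterating around the cycle, lie in $T(M,L)$, no such cycle exists. With this one paragraph inserted, your argument is complete, and it is then essentially equivalent to the paper's proof rather than an alternative to it.
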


For the spectral measure $\mu_{M,D}$ defined as in Theorem \ref{T1.1}, we completely settle two types of spectral eigenvalue problems for  $\mu_{M,D}$.

\begin{thm}\label{T1.2}
With the notations of Theorem \ref{T1.1}, let $\Lambda$ be defined in \eqref{1.3}. Suppose that t is a real number, then $t\Lambda$ is a spectrum of $\mu_{M,D}$ if and only if $t$ is an integer with $gcd(t,N)=1$ and there is no $I=i_1i_2{\dots}i_n(I\neq{0^n})$ with all $i_j\in L$ such that
\begin{eqnarray}\label{1.4}
((RN^q)^n-1)|t(i_1+(RN^q)i_2+\cdots+(RN^q)^{n-1}i_n)
\end{eqnarray}
for all $n\leq\#\{T(RN^q,tL)\cap\mathbb{Z}\}$, where
$$
T(RN^q,tL)=\left\{\sum_{j=1}^{\infty}(RN^q)^{-j}c_j:c_j\in{tL} \text{ for all } j\geq1\right\}.
$$
\end{thm}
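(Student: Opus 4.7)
The plan is to work within the Hadamard-triple / cycle-criterion framework for spectral self-affine measures. Recall that $\Lambda'\subset\mathbb{R}$ is a spectrum of $\mu_{M,D}$ iff $E(\Lambda')$ is orthonormal in $L^2(\mu_{M,D})$ and the completeness function $Q_{\Lambda'}(\xi):=\sum_{\lambda\in\Lambda'}|\hat\mu_{M,D}(\xi+\lambda)|^2$ is identically $1$. By Theorem~\ref{T1.1}, $(M,D,L)$ is already a Hadamard triple and $\Lambda$ is the associated base-$M$ encoding set with digits in $L$, so the question reduces to how $t$ interacts with these two ingredients.

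For necessity, I would first show that orthogonality of $E(t\Lambda)$ forces $(M,D,tL)$ to remain a Hadamard triple. Restricting attention to pairs $\ell,\ell'\in L\subset\Lambda$, unitarity of $H_t:=\frac{1}{\sqrt{\#D}}\bigl[e^{2\pi i d\ell t/M}\bigr]_{d\in D,\ell\in L}$ is required. Using the product structures of $D$ and $L$, one computes $d\ell/M$ and groups terms by scale: cross-scale terms of the form $N^{p_j-p_{j'}-1}$ with $j>j'$ are integers and contribute trivially in the exponentials, while the remaining terms assemble into $s+1$ blocks of Vandermonde type $[e^{2\pi i k k' t/N}]_{k,k'=0}^{N-1}$. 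Unitarity of each block is equivalent to $t\in\mathbb{Z}$ with $\gcd(t,N)=1$. Once this is known, the cycle obstruction arises in the standard way: an integer periodic orbit of the dual IFS $\sigma_c(x)=(x+c)/M$, $c\in tL$, produces a nonzero $L^2(\mu_{M,D})$-function orthogonal to $E(t\Lambda)$, and the fixed-point equation $x(M^n-1)=t(i_1+Mi_2+\cdots+M^{n-1}i_n)$ is exactly \eqref{1.4}.

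For sufficiency, the integer-coprime hypothesis yields orthogonality via the Hadamard triple $(M,D,tL)$. Completeness follows from the Jorgensen-Pedersen identity $Q_{t\Lambda}\equiv 1$: the zero set $\{Q_{t\Lambda}=0\}$, if nonempty, is forward-invariant under each $\sigma_c$ with $c\in tL$, lies in a bounded region, and meets $\mathbb{Z}$; a standard compactness/invariance argument then produces an integer periodic orbit, i.e., a bad cycle, contradicting the hypothesis. Finally, the pigeonhole bound $n\leq\#\{T(RN^q,tL)\cap\mathbb{Z}\}$ is immediate: a period-$n$ integer orbit consists of $n$ distinct integers lying inside the compact attractor $T(RN^q,tL)$, so $n$ cannot exceed $\#\{T(RN^q,tL)\cap\mathbb{Z}\}$.

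The main obstacle will be the Hadamard-triple step in necessity: because $D$ and $L$ are products of scaled copies of $\{0,1,\ldots,N-1\}$ at several resolutions $N^{p_j}$, carefully disentangling the cross-scale interference in $H_t$ into clean single-scale Vandermonde blocks — and extracting the sharp condition $t\in\mathbb{Z},\ \gcd(t,N)=1$ rather than a weaker rational condition modulo $M$ — requires careful bookkeeping with powers of $N$ and the factor $R$ coprime to $N$. A secondary subtlety is constructing the explicit witness function in the converse direction of the cycle condition: one must track how zeros of $\hat\mu_{M,D}$ along a bad orbit assemble into a nontrivial element of the kernel of the Fourier synthesis operator. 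The sufficiency side is relatively routine once the Dutkay-Jorgensen-Strichartz machinery is in place.
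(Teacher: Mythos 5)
Your general framework (Hadamard triples plus an integer--cycle criterion, with completeness handled by a Jorgensen--Pedersen/{\L}aba--Wang type argument) is the right machinery, and your witness-function outline matches the spirit of the paper's Lemma \ref{T3.5}. However, there are two genuine gaps. The first is the opening step of your necessity argument: orthogonality of $E(t\Lambda)$ does \emph{not} force $(M,D,tL)$ to be a Hadamard triple. Orthogonality only gives $t(\ell-\ell')\in\mathcal{Z}(\hat{\mu}_{M,D})=\bigcup_{k\geq1}M^{k}\mathcal{Z}(m_D)$, so the relevant zero may occur at a deeper scale $k\geq 2$, whereas unitarity of your matrix $H_t$ demands it at scale $k=1$. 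A concrete counterexample: take $t=M=RN^q$. Since $M\mathcal{Z}(\hat{\mu}_{M,D})\subset\mathcal{Z}(\hat{\mu}_{M,D})$, the set $M\Lambda$ is an orthogonal set; yet $\gcd(M,N)=N>1$, and $(M,D,ML)$ is not Hadamard at all, because $L=RN^{q-p_s-1}B\subset\mathbb{Z}$ makes every entry of $H_M$ equal to $1$. So the arithmetic conclusion $t\in\mathbb{Z}$, $\gcd(t,N)=1$ cannot be extracted from orthogonality; it genuinely needs completeness. This is exactly why the paper's Lemma \ref{T3.2} argues in several completeness-based steps: it first rules out, via the convolution obstruction of Lemma \ref{T2.7} (an orthogonal set of the tail measure $\delta_{M^{-2}D}*\delta_{M^{-3}D}*\cdots$ cannot be a spectrum of $\mu_{M,D}$), the possibility that all of $t\Lambda$ sits at scales $k\geq2$; it then uses maximality of a spectrum to show that $c^{-1}(t\Lambda\cap\Lambda_0)\cup\{0\}$ contains a complete residue system mod $N$, and only then does the number-theoretic Lemma \ref{T3.1} give $\gcd(t,N)=1$.

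The second gap is your pigeonhole step: it is false that a period-$n$ integer cycle ``consists of $n$ distinct integers.'' If the word $I$ is a power of a shorter word (say $I=JJ$), the orbit has at most $|J|$ distinct points, so from the hypothesis (no bad word of length $\leq l:=\#\{T(RN^q,tL)\cap\mathbb{Z}\}$) you cannot immediately bound the period by $l$. The missing content --- which is precisely the heart of the paper's proof of Theorem \ref{T1.2} from Lemma \ref{T3.5} --- is a length-reduction argument: given a word $I\neq 0^n$ of length $n>l$ satisfying \eqref{1.4}, one considers its cyclic rotations $\Pi^k(I)$; each rotation yields an integer $\frac{c\,\tau(\Pi^k(I))}{(RN^q)^n-1}$ lying in $T(RN^q,tL)\cap\mathbb{Z}$, and injectivity of the digit map $\tau$ shows the number $r$ of distinct rotations satisfies $r\leq l$; then $r\mid n$, $I=(I|_r)^{n/r}$, and the identity $\tau(I)=\tau(I|_r)\frac{(RN^q)^n-1}{(RN^q)^r-1}$ transfers the divisibility \eqref{1.4} from $I$ down to the short word $I|_r$, producing the desired contradiction. (Equivalently, one may extract a sub-cycle with pairwise distinct points and verify that the fixed-point divisibility survives the extraction; either way this step must be proved, not asserted.) Without this reduction, the specific threshold $\#\{T(RN^q,tL)\cap\mathbb{Z}\}$ in the statement is unjustified.
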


\begin{thm}\label{T1.3}
For a real number $t$, there exists a countable set $\Lambda'\subset \mathbb{R}$ such that $\Lambda'$ and $t\Lambda'$ are both spectra of $\mu_{M,D}$ if and only if $t=\frac{t_1}{t_2}$ for some $t_1,t_2\in\mathbb{Z}$, where $gcd(t_1,t_2)=1$ and $gcd(t_i,N)=1$ for i=1,2.
\end{thm}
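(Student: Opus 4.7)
The plan splits the equivalence into necessity and sufficiency, with both directions organised around the Parseval identity $\sum_{\lambda\in\Lambda'}|\widehat{\mu_{M,D}}(\xi+\lambda)|^2\equiv 1$ for a spectrum $\Lambda'$.

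\emph{Necessity.} Assume $\Lambda'$ and $t\Lambda'$ are both spectra of $\mu_{M,D}$; after a translation take $0\in\Lambda'$. Writing
\[\widehat{\mu_{M,D}}(\xi)=\prod_{k\ge 1}M_D(\xi/M^k),\qquad M_D(\xi)=\frac{1}{N^{s+1}}\prod_{i=0}^{s}\sum_{j=0}^{N-1}e^{2\pi ijN^{p_i}\xi}\]
(with the convention $p_0:=0$) and using $\gcd(R,N)=1$ together with $p_s<q$, a direct computation locates the zero set
\[\mathcal{Z}(\widehat{\mu_{M,D}})=\bigcup_{k\ge 1,\ 0\le i\le s}R^{k}N^{kq-p_i-1}\{a\in\mathbb{Z}:N\nmid a\}\ \subset\ \mathbb{Z}\setminus\{0\}.\]
Orthogonality forces both $\Lambda'-\Lambda'$ and $t(\Lambda'-\Lambda')$ into $\mathcal{Z}(\widehat{\mu_{M,D}})\cup\{0\}\subset\mathbb{Z}$; choosing any nonzero $\lambda\in\Lambda'$ then gives $t=(t\lambda)/\lambda\in\mathbb{Q}$, so write $t=t_1/t_2$ in lowest terms. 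To upgrade this to $\gcd(t_i,N)=1$, I argue by contradiction: if $N\mid t_2$ then $\gcd(t_1,t_2)=1$ forces $\Lambda'-\Lambda'\subset t_2\mathbb{Z}\subset N\mathbb{Z}$, and intersecting with the explicit zero set shows that $\Lambda'$ cannot reach enough shifts to satisfy the Parseval identity at a test point $\xi$ chosen to activate the $k=1$ factor of $M_D$ on the $i=s$ layer. The symmetric substitution $(\Lambda',t)\mapsto(t\Lambda',1/t)$ then yields $\gcd(t_1,N)=1$.

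\emph{Sufficiency.} Given $t=t_1/t_2$ with $\gcd(t_1,t_2)=1$ and $\gcd(t_i,N)=1$ for $i=1,2$, I construct $\Lambda'$ explicitly. Set $\Lambda'=t_2\widetilde\Lambda$, so that $t\Lambda'=t_1\widetilde\Lambda$, and look for $\widetilde\Lambda$ such that both $t_1\widetilde\Lambda$ and $t_2\widetilde\Lambda$ are spectra of $\mu_{M,D}$. Because $\gcd(t_1t_2,N)=1$, multiplication by $t_1$ and by $t_2$ is a bijection on $\mathbb{Z}/N^m\mathbb{Z}$ for every $m\ge 1$; using these bijections I select, level by level, digit sets $L_j\subset L+M\mathbb{Z}$ compatible with both scalings and build $\widetilde\Lambda=\{\sum_{j=1}^{k}M^{j-1}l_j:k\ge 1,\ l_j\in L_j\}$ in the style of \eqref{1.3}. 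The main obstacle is verifying the Parseval identity for the twisted sets $t_1\widetilde\Lambda$ and $t_2\widetilde\Lambda$: this will require a tower/tree argument along the infinite tree of admissible digit choices, adapting the Jorgensen--Pedersen completeness criterion to the product-form setting, and it is precisely the coprimality $\gcd(t_i,N)=1$ that makes the column sums at each level balance to $1$, closing the argument.
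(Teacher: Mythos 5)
Your proposal has genuine gaps in both directions. On the \emph{necessity} side, the step ruling out a common factor between the denominator and $N$ does not work as sketched. First, you only treat the case $N\mid t_2$, whereas the hypothesis to be contradicted is merely $\gcd(t_2,N)=d_2>1$ (e.g.\ $N=4$, $t_2=2$), in which case $\Lambda'\subset t_2\mathbb{Z}$ gives no containment in $N\mathbb{Z}$ at all. Second, and more seriously, even when $N\mid t_2$ the containment $\Lambda'-\Lambda'\subset N\mathbb{Z}$ is not a contradiction: by your own formula for the zero set, every element $R^kN^{kq-p_i-1}a$ of $\mathcal{Z}(\widehat{\mu}_{M,D})$ is divisible by $N$ whenever $p_s<q-1$ (since then $kq-p_i-1\geq q-p_s-1\geq 1$), so in that case \emph{every} spectrum containing $0$ satisfies this containment, and no choice of test point in the Parseval identity can extract a contradiction from it; even when $p_s=q-1$ one needs a convolution-factorization argument in the spirit of Lemma \ref{T2.7}, not a pointwise evaluation. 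What the paper uses instead is a maximality property of spectra (Step 3 of Lemma \ref{T3.2}): for any spectrum $\Gamma$ of $\mu_{M,D}$, the set $c^{-1}(\Gamma\cap\Lambda_0)\cup\{0\}$ contains a complete residue system mod $N$, where $c=\frac{RN^q}{N^{p_s+1}}$ and $\Lambda_0$ is the first layer of the zero set. One then picks $\lambda_0\in\Lambda'\cap\Lambda_0$ with $c^{-1}\lambda_0\equiv d_2-1\ (\mathrm{mod}\ N)$ and checks that $t\lambda_0\notin c\mathbb{Z}\supseteq\mathcal{Z}(\widehat{\mu}_{M,D})$, contradicting orthogonality of $t\Lambda'$. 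This residue-system ingredient is the heart of the necessity proof and is absent from your sketch.

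On the \emph{sufficiency} side, your reduction $\Lambda'=t_2\widetilde{\Lambda}$, $t\Lambda'=t_1\widetilde{\Lambda}$ is exactly the paper's, but the construction of $\widetilde{\Lambda}$ is left as an acknowledged ``main obstacle,'' and the heuristic you offer for closing it --- that $\gcd(t_i,N)=1$ makes the column sums balance --- is refuted by the paper itself. Coprimality yields only the compatible-pair (orthogonality) half, Lemma \ref{T3.3}; it does not yield completeness. Indeed, Theorem \ref{T1.2} together with the last example of Section \ref{5} ($M=12$, $D=\{0,1,2,3\}$, $t=11$) shows that for $t$ coprime to $N$ the naive choice $L_j\equiv L$ can fail to give a spectrum, because of the cycle obstruction $((RN^q)^m-1)\,\big|\,t\frac{RN^q}{N^{p_s+1}}(i_1+(RN^q)i_2+\cdots+(RN^q)^{m-1}i_m)$ coming from Theorem \ref{T3.4}. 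So completeness is not a level-by-level ``column sum'' matter; the real work is to defeat this divisibility obstruction simultaneously for $t_1$ and $t_2$. The paper does this with sign-twisted sets $\Lambda_\omega$, $\omega\in\{-1,1\}^{\infty}$: each $\Lambda_\omega$ is a spectrum by Strichartz's separation criterion (Theorem \ref{T4.1}), the periodic-word analogue of Theorem \ref{T1.2} holds (Theorem \ref{T4.3}), and one then chooses a single periodic word of period $r$ so large (with the repetend ending in $-1$'s) that $|(RN^q)^{nr}-1|$ strictly dominates $|t_i(i_1+(RN^q)^ri_2+\cdots+(RN^q)^{(n-1)r}i_n)|$ for both $i=1,2$, making the divisibility impossible, whence $t_1\Lambda_\omega$ and $t_2\Lambda_\omega$ are both spectra (Theorem \ref{T4.4} and the end of Section \ref{4}). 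Your level-by-level sets $L_j\subset L+M\mathbb{Z}$ could in principle play a similar role, but without a concrete selection rule and a completeness criterion the argument proves nothing; as written, it would equally ``prove'' that $11\Lambda$ is a spectrum in the example above, which is false.
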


The paper is structured as follows. Section \ref{2} is devoted to summarize some basic results and prove Theorem \ref{T1.1}. Section \ref{3} provides the proof of Theorem \ref{T1.2}. In Section \ref{4}, we will mainly prove Theorem \ref{T1.3}. In Section \ref{5}, we give several examples to illustrate Theorem \ref{T1.2}.

\section{Preliminaries and proof of Theorem \ref{T1.1}}\label{2}

\maketitle
In this section, we introduce some basic results that will be used in our proofs. In particular, we will prove Theorem \ref{T1.1}.

For the self-similar measure $\mu_{M,D}$ defined in \eqref{1.2}, it can also be expressed as an infinite convolution product
\begin{eqnarray*}
% \nonumber % Remove numbering (before each equation)
\mu_{M,D}&=& \delta_{M^{-1}D}*\delta_{M^{-2}D}*\delta_{M^{-3}D}*\cdots,
\end{eqnarray*}
where $\delta_{E}=\frac{1}{\#E}\sum\limits_{e\in E}\delta_e$, $\delta_e$ is the Dirac measure at the point $e\in E$ and the convergence is in weak sense.
The Fourier transform of $\mu_{M,D}$ is defined as usual,
\begin{eqnarray}\label{2.1}
\hat{\mu}_{M,D}(\xi)={\int}e^{2\pi{i}\langle \xi,x\rangle}d\mu_{M,D}(x)=\prod_{k=1}^{\infty}m_{D}(M^{-k}\xi), \quad \xi\in\mathbb{R},
\end{eqnarray}
where $m_D(x)$ is the Mask polynomial of $D$, which is defined by
$$m_D(x)=\hat{\delta}_D(x)=\frac{1}{\#D}\sum\limits_{d\in{D}}e^{2\pi{i}\langle d,x \rangle}, \quad x\in\mathbb{R}.$$
Let
$$\mu_n=\delta_{M^{-1}D}*\delta_{M^{-2}D}*\cdots*\delta_{M^{-n}D} $$
for $n\geq1$. Then
$$\mu_{M,D}=\mu_n*(\mu_{M,D}\circ M^n),$$
which yields that
$$\hat{\mu}_{M,D}(\xi)=\hat{\mu}_n(\xi)\hat{\mu}_{M,D}(M^{-n}\xi).$$
Let $\mathcal{Z}(f):=\{\xi:f(\xi)=0\}$ be the zero set of $f$. From \eqref{2.1}, it is easy to see that
$$\mathcal{Z}(\hat{\mu}_{M,D})=\bigcup_{k=1}^{\infty}M^k\mathcal{Z}(m_D).$$
\begin{lem}
$\mathcal{Z}(\hat{\mu}_{M,D})=\frac{RN^q}{N^{p_s+1}}\bigcup_{k=0}^{\infty}(RN^q)^k(\bigcup_{j=0}^{s}\{N^{p_s-p_j}l:N\nmid l\})$, where $p_0=0$.
\end{lem}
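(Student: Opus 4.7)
The plan is to reduce the computation of $\mathcal{Z}(\hat{\mu}_{M,D})$ to that of $\mathcal{Z}(m_D)$ via the identity $\mathcal{Z}(\hat{\mu}_{M,D})=\bigcup_{k=1}^{\infty}M^{k}\mathcal{Z}(m_D)$ already recorded above, and then exploit the direct-sum structure of $D$ to factor the mask polynomial.

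First, because $D=\{0,1,\dots,N-1\}\oplus N^{p_{1}}\{0,1,\dots,N-1\}\oplus\cdots\oplus N^{p_{s}}\{0,1,\dots,N-1\}$ is a direct sum, the mask polynomial factors as
\[
m_{D}(x)=\prod_{j=0}^{s}m_{\{0,1,\dots,N-1\}}\!\left(N^{p_{j}}x\right),\qquad p_{0}:=0.
\]
Thus a point $x$ lies in $\mathcal{Z}(m_{D})$ if and only if $m_{\{0,1,\dots,N-1\}}(N^{p_{j}}x)=0$ for some $0\leq j\leq s$. Using $m_{\{0,1,\dots,N-1\}}(y)=\frac{1}{N}\cdot\frac{e^{2\pi i Ny}-1}{e^{2\pi i y}-1}$, the zeros of this factor are exactly those $y$ with $Ny\in\mathbb{Z}$ but $y\notin\mathbb{Z}$, i.e.\ $y\in\frac{1}{N}\{l\in\mathbb{Z}:N\nmid l\}$. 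Substituting $y=N^{p_{j}}x$ gives $x\in\frac{1}{N^{p_{j}+1}}\{l:N\nmid l\}$.

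Next, I would rewrite each of these cosets with a common denominator $N^{p_{s}+1}$: since $\frac{1}{N^{p_{j}+1}}=\frac{N^{p_{s}-p_{j}}}{N^{p_{s}+1}}$, this yields
\[
\mathcal{Z}(m_{D})=\frac{1}{N^{p_{s}+1}}\bigcup_{j=0}^{s}\bigl\{N^{p_{s}-p_{j}}l:N\nmid l\bigr\}.
\]
Finally, plugging into $\mathcal{Z}(\hat{\mu}_{M,D})=\bigcup_{k=1}^{\infty}(RN^{q})^{k}\mathcal{Z}(m_{D})$ and shifting the summation index $k\mapsto k+1$ pulls one factor of $RN^{q}$ outside, producing the stated expression
\[
\mathcal{Z}(\hat{\mu}_{M,D})=\frac{RN^{q}}{N^{p_{s}+1}}\bigcup_{k=0}^{\infty}(RN^{q})^{k}\Bigl(\bigcup_{j=0}^{s}\bigl\{N^{p_{s}-p_{j}}l:N\nmid l\bigr\}\Bigr).
\]

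There is essentially no hard step here; the proof is a direct computation. The only point requiring a little care is verifying that the direct-sum hypothesis actually produces the factorization of $m_{D}$ without redundant zeros or cancellations, and then handling the change of common denominator so that all zero cosets are expressed uniformly as multiples of $\frac{1}{N^{p_{s}+1}}$. Once those two bookkeeping items are in place, the result drops out.
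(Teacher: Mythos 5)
Your proof is correct and follows essentially the same route as the paper's: both exploit the direct-sum structure of $D$ to factor the mask polynomial into geometric-series factors, identify the zeros of each factor as $\frac{1}{N^{p_j+1}}\{l: N\nmid l\}$, and then rescale by powers of $M=RN^q$ after passing to the common denominator $N^{p_s+1}$. The only difference is organizational: you invoke the previously displayed identity $\mathcal{Z}(\hat{\mu}_{M,D})=\bigcup_{k\geq1}M^{k}\mathcal{Z}(m_D)$ up front, whereas the paper re-derives that reduction inside the proof by letting $\hat{\mu}_{M,D}(M^{-m}\xi)\to 1$ as $m\to\infty$.
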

\begin{proof}
For any positive integer $m$, we have
\begin{eqnarray*}
\hat{\mu}_{M,D}(\xi)&=&\prod_{k=1}^{m}(\frac{1}{N^{s+1}}\sum\limits_{d\in D}e^{2\pi{M^{-k}d\xi i}})\hat{\mu}_{M,D}(M^{-m}\xi){}
\nonumber\\&=&\prod_{k=1}^{m}(\frac{1}{N^{s+1}}\sum_{d_1=0}^{N-1}\sum_{d_2=0}^{N-1}\cdots\sum_{d_{s+1}=0}^{N-1}e^{2\pi{M^{-k}(d_1+N^{p_1}d_2+\cdots+N^{p_s}d_{s+1})\xi i}})\hat{\mu}_{M,D}(M^{-m}\xi){}
\nonumber\\&=&\prod_{k=1}^{m}(\frac{1}{N^{s+1}}\prod_{j=1}^{s+1}(\sum_{d_j=0}^{N-1}e^{2\pi{M^{-k}N^{p_{j-1}}d_j\xi i}}))\hat{\mu}_{M,D}(M^{-m}\xi).
\end{eqnarray*}
Since $\hat{\mu}_{M,D}(M^{-m}\xi)\rightarrow 1$ as $m \rightarrow \infty$, $\hat{\mu}_{M,D}(\lambda)=0$ if and only if there exist $k\in \mathbb{N}$ and $j\in\{1,2,\dots,s+1\}$ such that
$$\sum_{d_j=0}^{N-1}(e^{2\pi M^{-k}N^{p_{j-1}}\lambda i})^{d_j}=0.$$
We see that the above equation is equivalent to
$$1-(e^{2\pi M^{-k}N^{p_{j-1}}\lambda i})^N=0,\quad e^{2\pi M^{-k}N^{p_{j-1}}\lambda i}\neq1.$$
Hence, $\hat{\mu}_{M,D}(\lambda)=0$ if and only if there exist integers $l\in \mathbb{Z}$ and $k\in \mathbb{N}$ such that $2\pi M^{-k}N^{p_{j-1}+1}\lambda =2\pi l$ and $M^{-k}N^{p_{j-1}}\lambda$ is not an integer, i.e., $\lambda=\frac{(RN^q)^kl}{N^{p_{j-1}+1}}$ with $l\in\mathbb{Z}\setminus N\mathbb{Z}$, $k \in \mathbb{N}$ and  $j\in\{1,2,\dots,s+1\}$. The conclusion follows.
\end{proof}

It is clear that a discrete set $\Lambda$ such that $E_{\Lambda}=\{e^{2\pi{i}\langle \lambda,x \rangle}:\lambda\in \Lambda\}$ is an orthonormal family for $L^2(\mu)$ if and only if
$$(\Lambda-\Lambda)\backslash \{0\}\subset\mathcal{Z}(\hat{\mu}).$$
In this case, we also say that $\Lambda$ is a bi-zero set of $\mu$. Since the bi-zero set is invariant under translation, it will be convenient to assume that $0\in \Lambda$, and thus $\Lambda \subset \Lambda-\Lambda$.

Next we introduce the Hadamard triple, which play a major role in the study of spectral measures.

\begin{de}[Hadamard triple]
Let $M\in M_n(\mathbb{Z})$ be a $n\times n$ expansive matrix $($i.e., all eigenvalues have modulus strictly greater than 1$)$ with integer entries. Let $D,C\subset \mathbb{Z}^n$ be two finite subsets of integer vectors with $\#D=\#C\geq2$. We say that the system $(M,D,C)$ forms a Hadamard triple $($or $(M^{-1}D,C)$ is a compatible  pair$)$ if the matrix
$$H=\frac{1}{\sqrt{\#D}}[e^{2\pi{i}\langle M^{-1}d,c \rangle}]_{d\in D,c\in C}$$
is unitary, i.e., $H^*H=I$, where $H^*$ denotes the conjugated transposed matrix of $H$.
\end{de}

{\L}aba and Wang  \cite{LW02} proved that the Hadamard triple guarantees the spectrality of the self-similar measure $\mu_{M,D}$ in $\mathbb{R}$.

\begin{thm}\cite[Theorem 1.2]{LW02}\label{T2.3}
Let $N\in \mathbb{Z}$ with $|N|>1$ and let $\mathcal{D}$ be a finite set of integers. Let $\mathcal{J} \subset\mathbb{Z}$ such that $0\in\mathcal{J} $ and $(\frac{1}{N}\mathcal{D},\mathcal{J} )$ is a compatible pair. Then the self-similar measure $\mu_{N,\mathcal{D}}$ is a spectral measure. If moreover $gcd(\mathcal{D}-\mathcal{D})=1$ and $0\in\mathcal{J} \subseteq[2-|N|,|N|-2]$, then $\Lambda(N,\mathcal{J})$ is a spectrum of $\mu_{N,\mathcal{D}}$, where
$$\Lambda(N,\mathcal{J})=\left\{\sum_{j=0}^{k}a_jN^j:k\geq1 \text{ and all }  a_j\in \mathcal{J} \right\}.$$
\end{thm}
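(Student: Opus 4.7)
The plan is to verify that $\Lambda = \Lambda(N,\mathcal{J})$ is a spectrum of $\mu_{N,\mathcal{D}}$ under the full hypotheses of the theorem, using the Jorgensen--Pedersen criterion: $\Lambda$ is a spectrum of $\mu_{N,\mathcal{D}}$ if and only if the function
\[
Q_\Lambda(\xi) := \sum_{\lambda \in \Lambda} \bigl|\hat{\mu}_{N,\mathcal{D}}(\xi+\lambda)\bigr|^2
\]
is identically $1$ on $\mathbb{R}$. I would split this into orthogonality (the easier inequality $Q_\Lambda \leq 1$, equivalent to $(\Lambda - \Lambda)\setminus\{0\} \subset \mathcal{Z}(\hat{\mu}_{N,\mathcal{D}})$) and completeness ($Q_\Lambda \geq 1$). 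The bare spectrality claim in the first sentence of the theorem can then be deduced from the explicit construction by observing that any compatible pair $(\tfrac{1}{N}\mathcal{D},\mathcal{J})$ can be translated coordinatewise within cosets mod $N$ to produce an equivalent compatible pair with digit set in $[2-|N|,|N|-2]$, without altering the underlying measure.

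For orthogonality, the compatible-pair hypothesis unpacks to $m_{\mathcal{D}}((j-j')/N) = 0$ for all distinct $j,j' \in \mathcal{J}$. Given two elements $\lambda = \sum_{k=0}^{n} a_k N^k$ and $\lambda' = \sum_{k=0}^{n} b_k N^k$ of $\Lambda$ with smallest differing index $i$, one writes $\lambda - \lambda' = N^i(a_i - b_i) + N^{i+1} r$ with $r \in \mathbb{Z}$. Since $\mathcal{D}\subset\mathbb{Z}$, $m_{\mathcal{D}}$ is $\mathbb{Z}$-periodic, so the point $(\lambda-\lambda')/N^{i+1}$ lies in $(a_i - b_i)/N + \mathbb{Z} \subset \mathcal{Z}(m_{\mathcal{D}})$. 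The infinite product $\hat{\mu}_{N,\mathcal{D}}(\xi) = \prod_{k\geq 1} m_{\mathcal{D}}(\xi/N^k)$ then forces $\hat{\mu}_{N,\mathcal{D}}(\lambda - \lambda') = 0$, i.e.\ $\Lambda$ is a bi-zero set.

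The harder half is completeness. Using $0 \in \mathcal{J}$, one has the self-affinity $\Lambda = \mathcal{J} + N\Lambda$, which together with $\hat{\mu}_{N,\mathcal{D}}(\xi) = m_{\mathcal{D}}(\xi/N)\hat{\mu}_{N,\mathcal{D}}(\xi/N)$ and the $\mathbb{Z}$-periodicity of $m_{\mathcal{D}}$ (applicable because $\Lambda \subset \mathbb{Z}$) yields the transfer-operator identity
\[
Q_\Lambda(\xi) = \sum_{j \in \mathcal{J}} \bigl|m_{\mathcal{D}}((\xi+j)/N)\bigr|^2 \, Q_\Lambda((\xi+j)/N).
\]
A direct evaluation gives $Q_\Lambda(0) = 1$, since the term $\lambda = 0 \in \Lambda$ contributes $|\hat{\mu}_{N,\mathcal{D}}(0)|^2 = 1$ and every other $\lambda \in \Lambda$ is a bi-zero. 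The main obstacle is upgrading this pointwise identity to $Q_\Lambda \equiv 1$. Here the two remaining hypotheses are decisive: the range condition $\mathcal{J} \subseteq [2-|N|,|N|-2]$ confines the attractor of the dual IFS $\tau_j(x) = (x+j)/N$ to a bounded neighborhood of $0$, so iterating the transfer identity contracts any $\xi$ into that attractor; and $\gcd(\mathcal{D}-\mathcal{D}) = 1$ rules out spurious common zeros of $m_{\mathcal{D}}$ along orbits that would otherwise let $Q_\Lambda$ dip below $1$. Combining continuity of $Q_\Lambda$ (from the compact support of $\mu_{N,\mathcal{D}}$, which makes $\hat{\mu}_{N,\mathcal{D}}$ entire of exponential type), the transfer identity, and the basepoint value $Q_\Lambda(0) = 1$, a standard iteration/equicontinuity argument forces $Q_\Lambda \equiv 1$, completing the proof.
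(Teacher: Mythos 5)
This statement is quoted in the paper as \cite[Theorem 1.2]{LW02} without proof, so your proposal has to be measured against {\L}aba and Wang's actual argument. Your orthogonality half is correct and standard, as are the transfer identity $Q_\Lambda(\xi)=\sum_{j\in\mathcal{J}}|m_{\mathcal{D}}((\xi+j)/N)|^2\,Q_\Lambda((\xi+j)/N)$ and the evaluation $Q_\Lambda(0)=1$. The genuine gap is your last step: knowing $Q_\Lambda\le 1$, $Q_\Lambda(0)=1$, continuity of $Q_\Lambda$, and the transfer identity with weights summing to $1$ does \emph{not} force $Q_\Lambda\equiv 1$, and no ``standard iteration/equicontinuity argument'' can, because every one of these ingredients also holds in cases where the conclusion is false. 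Take $N=2$, $\mathcal{D}=\{0,1\}$, $\mathcal{J}=\{0,1\}$ (a compatible pair with $\gcd(\mathcal{D}-\mathcal{D})=1$): then $\mu_{N,\mathcal{D}}$ is Lebesgue measure on $[0,1]$, $\Lambda(N,\mathcal{J})=\{0,1,2,\dots\}$, and $Q_\Lambda(\xi)=\frac{\sin^2(\pi\xi)}{\pi^2}\sum_{n\ge 0}(\xi+n)^{-2}$, which equals $1$ at $\xi=0$ but is strictly less than $1$ on $(0,1)$; here only the range condition fails. Your stated use of the range condition---that it confines the dual attractor to ``a bounded neighborhood of $0$''---cannot be the operative point, since $T(N,\mathcal{J})$ is bounded for \emph{every} finite $\mathcal{J}$. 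What $\mathcal{J}\subseteq[2-|N|,|N|-2]$ actually gives is $T(N,\mathcal{J})\subseteq\bigl[\frac{2-|N|}{|N|-1},\frac{|N|-2}{|N|-1}\bigr]\subset(-1,1)$, hence $T(N,\mathcal{J})\cap\mathbb{Z}=\{0\}$. The missing idea is how this is used: failure of completeness is equivalent to the existence of a cycle of \emph{nonzero integers} $\eta_{j+1}=N^{-1}(\eta_j+s_j^*)$ with $s_j^*\in\mathcal{J}$---this is exactly \cite[Theorem 1.3]{LW02}, quoted in this paper as Theorem~\ref{T3.4} and exploited in the same way in Lemma~\ref{T3.5}---and any such $\eta_j$ lies in $T(N,\mathcal{J})\subset(-1,1)$, so it must vanish, a contradiction. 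Proving that equivalence (propagating the minimum of $Q_\Lambda$ on the attractor along orbits with nonvanishing weights, where $\gcd(\mathcal{D}-\mathcal{D})=1$ enters precisely to identify $\{\xi:|m_{\mathcal{D}}(\xi)|=1\}=\mathbb{Z}$) is the real content of the theorem; your sketch neither proves it nor cites it.

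Your reduction for the first sentence is also flawed. Translating each $j\in\mathcal{J}$ by a multiple of $N$ does preserve compatibility (it multiplies columns of the Hadamard matrix by unimodular scalars), but for $|N|=2$ the target interval $[2-|N|,|N|-2]=\{0\}$ cannot receive the odd element that compatibility forces $\mathcal{J}$ to contain, as in the example above; moreover the first sentence does not assume $\gcd(\mathcal{D}-\mathcal{D})=1$, so even when the translation succeeds the second statement need not apply (repairable by rescaling $\mathcal{D}$, but unaddressed). In fact no single translate of $\mathcal{J}$ works in general: for Lebesgue measure on $[0,1]$, both $\Lambda(2,\{0,1\})=\mathbb{Z}_{\ge 0}$ and $\Lambda(2,\{0,-1\})=\mathbb{Z}_{\le 0}$ fail to be spectra. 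The general spectrality in \cite{LW02}---mirrored in this paper by Strichartz's tower criterion (Theorem~\ref{T4.1}) and the $\Lambda_\omega$ construction in Section~\ref{4}---chooses the representatives of $\mathcal{J}$ modulo $N$ \emph{level by level}; for instance, alternating signs yields base $-2$ digits and recovers the spectrum $\mathbb{Z}$ in the example. So both halves of the theorem require machinery (the integer-cycle criterion, the tower construction) that your proposal replaces with an appeal to a ``standard'' argument that does not exist at this level of generality.
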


Recently, Dutkay et al \cite{DHL19} extended the result to the self-affine measure $\mu_{M,D}$ in higher dimension.

\begin{thm}\cite[Theorem 1.3]{DHL19}
Let $(R,B,L)$ be a Hadamard triple with $R\in M_d(\mathbb{Z})$ and  $B,L\subset \mathbb{Z}^d$. Then the self-affine measure $\mu(R,B)$ is spectral.
\end{thm}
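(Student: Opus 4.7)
The plan is to prove spectrality of $\mu(R,B)$ in two stages: first construct a candidate spectrum $\Lambda$ from the dual digit set $L$ and verify that $E(\Lambda)$ is an orthogonal family, then establish that this family is complete in $L^{2}(\mu(R,B))$. The Hadamard triple hypothesis will feed into both stages, but in genuinely different ways: as an orthogonality relation for the first, and as a non-degeneracy condition on the mask $m_{B}$ for the second.

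For orthogonality, I would take the candidate spectrum to be the orbit of the dual IFS $\xi\mapsto (R^{T})^{-1}(\xi+\ell)$, $\ell\in L$, namely
\[
\Lambda_n \;=\; L + R^{T}L + (R^{T})^{2}L + \cdots + (R^{T})^{n-1}L,\qquad \Lambda \;=\; \bigcup_{n\ge 1}\Lambda_n,
\]
normalised so that $0\in\Lambda$. Writing $\widehat{\mu(R,B)}(\xi)=\prod_{k\ge 1}m_{B}(R^{-k}\xi)$, the Hadamard relation $H^{*}H=I$ says precisely that for two distinct $\ell,\ell'\in L$ the value $m_{B}(R^{-T}(\ell-\ell'))$ participates in a vanishing inner product; iterating along the factorisation of $\lambda-\lambda'\in\Lambda-\Lambda$ and telescoping yields $(\Lambda-\Lambda)\setminus\{0\}\subset \mathcal{Z}(\widehat{\mu(R,B)})$, which is exactly the bi-zero set condition guaranteeing orthonormality of $E(\Lambda)$.

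For completeness I would study the Bessel-Parseval function
\[
Q_{\Lambda}(\xi) \;=\; \sum_{\lambda\in\Lambda}\bigl|\widehat{\mu(R,B)}(\xi+\lambda)\bigr|^{2},
\]
which, by orthogonality, is entire and satisfies $Q_{\Lambda}(\xi)\le 1$ everywhere, with $Q_{\Lambda}\equiv 1$ equivalent to $\Lambda$ being a spectrum. Using the self-similarity $\widehat{\mu(R,B)}(\xi)=m_{B}(R^{-T}\xi)\widehat{\mu(R,B)}(R^{-T}\xi)$ and the identity $\sum_{\ell\in L}|m_{B}(R^{-T}(\xi+\ell))|^{2}=1$ (which is the diagonal of $HH^{*}=I$), one obtains a transfer-operator recursion
\[
Q_{\Lambda}(\xi)\;=\;\sum_{\ell\in L}\bigl|m_{B}(R^{-T}(\xi+\ell))\bigr|^{2}\,Q_{\Lambda}(R^{-T}(\xi+\ell)).
\]
The goal is then to show that the only such continuous fixed point, normalised by $Q_{\Lambda}(0)=1$, is $Q_{\Lambda}\equiv 1$.

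The main obstacle, and what constitutes the heart of the DHL19 argument, is this last uniqueness step: the recursion alone permits spurious fixed points supported on the set where the mask $m_{B}$ vanishes along entire forward orbits of the dual IFS. To rule these out I would (i) localise $Q_{\Lambda}$ to the attractor of the dual system $(R^{T},L)$ via a Ruelle-Perron-Frobenius / equidistribution argument on symbolic codings, and (ii) exploit the Hadamard condition a second time to show that no $L$-branch can keep $\xi$ inside $\mathcal{Z}(m_{B})$ indefinitely. Combining (i) and (ii), the transfer operator has a unique continuous invariant density equal to $1$, which forces $Q_{\Lambda}\equiv 1$ and completes the proof. I expect (ii) — the dynamical non-degeneracy of the zero set of $m_{B}$ — to be the genuinely hard ingredient, as it is the place where higher dimension $d\ge 2$ introduces obstructions not present in the one-dimensional {\L}aba--Wang setting of Theorem \ref{T2.3}.
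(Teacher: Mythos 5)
This theorem is not proved in the paper at all: it is quoted from \cite{DHL19} as a black box (the paper only proves the one-dimensional facts it needs via Lemma \ref{T2.6} and Theorem \ref{T2.3}), so your proposal has to be measured against the actual argument of Dutkay--Haussermann--Lai, and there it has a genuine gap. Your orthogonality stage is standard and fine. The completeness stage, however, rests on your step (ii): the claim that the Hadamard condition prevents the dual IFS $\xi\mapsto (R^{T})^{-1}(\xi+\ell)$ from having nontrivial orbits that remain in the degenerate set forever, so that $Q_{\Lambda}\equiv 1$ for the canonical candidate $\Lambda=\bigcup_{n}\bigl(L+R^{T}L+\cdots+(R^{T})^{n-1}L\bigr)$. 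That claim is false, already in dimension one. Take $(R,B,L)=(4,\{0,1\},\{0,6\})$: this is a Hadamard triple (since $e^{2\pi i\cdot 6/4}=-1$), yet the dual branch $\xi\mapsto(\xi+6)/4$ has the nonzero integer fixed point $\eta=2$, i.e.\ a cycle in the sense of Theorem \ref{T3.4}, and that theorem (\L aba--Wang) says precisely that $(\mu_{4,\{0,1\}},\Lambda(4,\{0,6\}))$ is \emph{not} a spectral pair. So here $E(\Lambda)$ is orthonormal but incomplete and $Q_{\Lambda}\not\equiv 1$; no transfer-operator uniqueness argument can show otherwise, because you are trying to prove a statement stronger than the theorem which is simply wrong. (This failure mode --- cycles --- is exactly the phenomenon the present paper exploits in Lemma \ref{T3.5} and Theorem \ref{T1.2} for the scaled sets $t\Lambda$.) Your closing remark that the obstruction is absent in the one-dimensional \L aba--Wang setting has it backwards: it is present there too, and it is why their positive result requires either the restriction $\mathcal{J}\subseteq[2-|N|,|N|-2]$ of Theorem \ref{T2.3} or a change of the digit set $\mathcal{J}$.

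Consequently the proof in \cite{DHL19} is structured differently: completeness is never proved for the canonical $\Lambda$, but for a spectrum built from dual digit sets that are allowed to change from level to level (in dimension one this is already how \L aba--Wang circumvent cycles). In higher dimensions even that is insufficient in general, and \cite{DHL19} argue by a dichotomy combined with induction on the dimension $d$: either the triple can be conjugated and modified so that a Strichartz-type completeness criterion (in the spirit of Theorem \ref{T4.1}) applies to the modified candidate, or the zero set of $\hat{\mu}$ carries a nontrivial periodic invariant structure, which forces the measure into a quasi-product form along a proper rational invariant subspace; in that case the spectrum is assembled from lower-dimensional spectra supplied by the induction hypothesis. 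Your sketch contains neither the freedom to modify $L$ and the candidate spectrum nor this dimension-reduction mechanism, and without both it cannot close even in $d=1$.
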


We record the following equivalent conditions, whose proof is now standard, so it will be omitted.

\begin{lem}\label{T2.5}
The following conditions are equivalent.

$(\romannumeral1)$ $(N,\mathcal{D},\mathcal{L})$ forms a Hadamard triple.

$(\romannumeral2)$ $\delta_{\frac{\mathcal{D}}{N}}$ is a spectral measure with spectrum $\mathcal{L}$.
\end{lem}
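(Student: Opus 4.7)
The plan is to identify the orthonormality condition for the exponential family $E(\mathcal{L})=\{e^{2\pi i\lambda x}:\lambda\in\mathcal{L}\}$ in $L^2(\delta_{\mathcal{D}/N})$ with the unitarity of the matrix $H$ appearing in the definition of a Hadamard triple, and to exploit the fact that $L^2(\delta_{\mathcal{D}/N})$ is finite-dimensional so that orthonormality of the right number of vectors forces completeness.

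First I would observe that the measure $\delta_{\mathcal{D}/N}=\frac{1}{\#\mathcal{D}}\sum_{d\in\mathcal{D}}\delta_{d/N}$ is supported on the finite set $\mathcal{D}/N$ of cardinality $\#\mathcal{D}$ (one should note that $\mathcal{D}$ is a set of distinct integers, so the points $d/N$ are distinct). Hence $L^2(\delta_{\mathcal{D}/N})$ is a Hilbert space of dimension exactly $\#\mathcal{D}$. Next I would compute, for any $\lambda,\lambda'\in\mathcal{L}$,
\begin{equation*}
\langle e^{2\pi i\lambda x},e^{2\pi i\lambda' x}\rangle_{L^2(\delta_{\mathcal{D}/N})}
=\int e^{2\pi i(\lambda-\lambda')x}\,d\delta_{\mathcal{D}/N}(x)
=\frac{1}{\#\mathcal{D}}\sum_{d\in\mathcal{D}}e^{2\pi i(\lambda-\lambda')d/N}.
\end{equation*}
Arranging these inner products into a matrix, I recognize the right-hand side as the $(\lambda',\lambda)$ entry of $H^{*}H$, where $H=\frac{1}{\sqrt{\#\mathcal{D}}}\bigl[e^{2\pi i d\lambda/N}\bigr]_{d\in\mathcal{D},\,\lambda\in\mathcal{L}}$ is exactly the matrix in the Hadamard triple definition (with the expansive integer $N$ playing the role of $M$).

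From this identification the equivalence follows at once. For the direction $(\mathrm{i})\Rightarrow(\mathrm{ii})$: if $(N,\mathcal{D},\mathcal{L})$ is a Hadamard triple, then $\#\mathcal{L}=\#\mathcal{D}$ by definition, and $H^{*}H=I$ says that $E(\mathcal{L})$ is an orthonormal set of $\#\mathcal{D}=\dim L^2(\delta_{\mathcal{D}/N})$ vectors, hence an orthonormal basis, so $\mathcal{L}$ is a spectrum of $\delta_{\mathcal{D}/N}$. Conversely, for $(\mathrm{ii})\Rightarrow(\mathrm{i})$: if $\mathcal{L}$ is a spectrum, then $E(\mathcal{L})$ is an orthonormal basis of $L^2(\delta_{\mathcal{D}/N})$, which forces $\#\mathcal{L}=\dim L^2(\delta_{\mathcal{D}/N})=\#\mathcal{D}$ and $H^{*}H=I$; since $H$ is then a square matrix with $H^{*}H=I$, it is unitary, so $(N,\mathcal{D},\mathcal{L})$ forms a Hadamard triple.

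There is really no substantial obstacle here; the only subtlety worth flagging is that one must verify the cardinality equality $\#\mathcal{L}=\#\mathcal{D}$ on both sides (it is built into the definition of Hadamard triple, and on the spectral side it follows from $E(\mathcal{L})$ being an orthonormal basis of a space of known finite dimension). Once that is in place the identification of inner products with entries of $H^{*}H$ makes the equivalence a one-line verification, which is why the authors call the proof standard and omit it.
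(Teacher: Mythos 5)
Your proof is correct and is precisely the standard argument the paper alludes to when it omits the proof: the Gram matrix of $E(\mathcal{L})$ in $L^2(\delta_{\mathcal{D}/N})$ is exactly $H^{*}H$, and finite-dimensionality ($\dim L^2(\delta_{\mathcal{D}/N})=\#\mathcal{D}$) converts orthonormality plus the cardinality count into completeness, and conversely. No gaps worth flagging beyond the implicit assumption (standing in this paper) that $\mathcal{D},\mathcal{L}\subset\mathbb{Z}$, which the formal definition of a Hadamard triple requires.
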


In order to prove Theorem \ref{T1.1}, we need the following lemma.

\begin{lem}\label{T2.6}
If $R>1$, $N>1$, $q$, $p_i$ are positive integers  for all $1\leq i\leq s$ with gcd$(R,N)=1$ and $p_1<p_2<\cdots<p_s<q$, then
$(M,D,L)$ forms a Hadamard triple,
where $$M=RN^q, D=\{0,1,\dots,N-1\}\oplus N^{p_1}\{0,1,\dots,N-1\}\oplus \cdots \oplus N^{p_s}\{0,1,\dots,N-1\}$$ and  $$L=RN^q(\frac{1}{N}\{0,1,\dots,N-1\}\oplus\frac{1}{N^{p_1+1}}\{0,1,\dots,N-1\}\oplus\cdots\oplus\frac{1}{N^{p_s+1}}\{0,1,\dots,N-1\}).$$
\end{lem}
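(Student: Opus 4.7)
The plan is to verify directly the Hadamard condition, i.e., that the matrix
$H = \frac{1}{\sqrt{\#D}}\bigl[e^{2\pi i M^{-1} d \ell}\bigr]_{d \in D, \ell \in L}$
is unitary. First I would check that both $D$ and $L$ are genuine direct sums with $\#D = \#L = N^{s+1}$; this follows from a short base-$N$ uniqueness argument comparing the $N$-adic valuations of a vanishing combination, using $p_0 := 0 < p_1 < \cdots < p_s$ and the digit range $\{0,\ldots,N-1\}$. With this counting, $H$ is square, so it suffices to prove the columns are orthonormal:
$$\frac{1}{\#D}\sum_{d\in D}e^{2\pi i M^{-1}d(\ell'-\ell)} = \delta_{\ell,\ell'}.$$

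Writing $d = \sum_{j=0}^{s} N^{p_j} a_j$ with $a_j \in \{0,\ldots,N-1\}$, and $\ell'-\ell = \sum_{k=0}^{s} R N^{q-p_k-1} c_k$ with $c_k\in\{-(N-1),\ldots,N-1\}$, a direct expansion gives
$$M^{-1} d (\ell'-\ell) = \sum_{j,k} N^{p_j - p_k - 1}\, a_j\, c_k,$$
so the sum over $d$ factorises as $\prod_{j=0}^{s} \sum_{a_j=0}^{N-1} e^{2\pi i a_j X_j}$ with $X_j = \sum_k N^{p_j - p_k - 1} c_k$. I would split $X_j = I_j + c_j/N + F_j$, where $I_j$ collects the integer contributions from $k<j$ (exponent $\geq 0$) and $F_j = \sum_{k>j} N^{p_j-p_k-1}c_k$ is the deep-fractional part (exponent $\leq -2$).

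The crux of the argument is the following observation. If $(c_0,\ldots,c_s)\neq 0$, let $k^{*}$ denote the largest index with $c_{k^{*}}\neq 0$. Then $F_{k^{*}} = 0$ (no later nonzero terms), so $NX_{k^{*}} = NI_{k^{*}} + c_{k^{*}}\in\mathbb{Z}$; and since $|c_{k^{*}}| < N$ with $c_{k^*}\neq 0$, $c_{k^{*}}$ is not divisible by $N$, so the geometric-sum identity $\sum_{a=0}^{N-1}e^{2\pi i a X_{k^{*}}}=0$ applies and kills the whole product. When $(c_0,\ldots,c_s)=0$ every $X_j$ is an integer and each factor equals $N$, producing the correct normalisation $N^{s+1}=\#D$.

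The main obstacle is the presence of the fractional "upper-triangular" terms $N^{p_j-p_k-1}c_k$ for $k>j$, which prevent the product from decomposing as a tensor product of compatibility relations for each coordinate. The largest-nonzero-index trick sidesteps this cleanly: at the single index $k^{*}$ the offending fractions are absent, so one factor in the product already vanishes, which is all that is needed for column orthogonality and hence for the Hadamard condition.
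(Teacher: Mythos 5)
Your proposal is correct and follows essentially the same route as the paper: the paper reduces via Lemma \ref{T2.5} to showing that $L$ is a spectrum of $\delta_{M^{-1}D}$, proves orthogonality by a case analysis on the largest index at which the digit expansions of two elements $l,l'\in L$ differ --- exactly your $k^{*}$ trick, phrased as membership of $l-l'$ in $\mathcal{Z}(\hat{\delta}_{M^{-1}D})$ --- and concludes with the same cardinality count $\#L=\#D=N^{s+1}$. Your direct verification of column orthogonality of the Hadamard matrix simply inlines the standard equivalence of Lemma \ref{T2.5} rather than citing it, so the two arguments coincide in substance.
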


\begin{proof}
By Lemma \ref{T2.5}, it is enough to prove the result if $L$ is a spectrum of the measure $\delta_{M^{-1}D}$. We first claim that $L$ is an orthogonal set of $\delta_{M^{-1}D}$. Indeed, it is easily seen that $$\mathcal{Z}(\hat{\delta}_{M^{-1}D})=M\mathcal{Z}(\hat{\delta}_D)=\frac{RN^q}{N^{p_s+1}}(\bigcup_{j=0}^{s}\{N^{p_s-p_j}l:N\nmid l\}),$$
 where $p_0=0.$ For any two distinct elements $l,l'\in L$, write
 $$l=RN^q\sum_{i=1}^{s+1}\frac{l_i}{N^{p_{i-1}+1}} \text{ and }  l'=RN^q\sum_{i=1}^{s+1}\frac{l_i'}{N^{p_{i-1}+1}},$$
where $l_i,l_i'\in \{0,1,\dots,N-1\}$ for all $i\in\{1,2,\dots,s+1\}$. Then, we have
$$l-l'=\frac{RN^q}{N^{p_s+1}}\sum_{i=1}^{s+1}N^{p_s-p_{i-1}}(l_i-l_i').$$

Case 1: $l_{s+1}\neq l_{s+1}'$. Then $N\nmid (l_{s+1}-l_{s+1}')$ and $l-l'\in \mathcal{Z}(\hat{\delta}_{M^{-1}D})$.

Case 2: $l_{s+1} = l_{s+1}', l_s\neq l_s'$. We have
$$l-l'=\frac{RN^q}{N^{p_s+1}}(N^{p_s-p_{s-1}}(\sum_{i=1}^{s}N^{p_{s-1}-p_{i-1}}(l_i-l_i')))$$
and $N\nmid (l_s-l_s')$. Hence $l-l'\in \mathcal{Z}(\hat{\delta}_{M^{-1}D})$.

$\vdots$

Case {s+1}: $l_{s+1} = l_{s+1}', \cdots ,l_2 = l_2' ,l_1 \neq l_1'.$ Then $l-l'=\frac{RN^q}{N^{p_s+1}}N^{p_s}(l_1-l_1')$. Similarly as above, we have that $N\nmid (l_1-l_1')$ and thus $l-l'\in \mathcal{Z}(\hat{\delta}_{M^{-1}D})$.

Consequently, the claim follows. It is also a simple matter to the cardinality of the set $L$ is equal to the dimension of the Hilbert space $L^2({\delta}_{M^{-1}D})$ and thus $L$ is a spectrum of ${\delta}_{M^{-1}D}$. This completes the proof of Lemma \ref{T2.6}.
\end{proof}

The proof of Theorem \ref{T1.1} is now immediate.

\begin{proof}[{Proof of Theorem \ref{T1.1}}]
We know $(M^{-1}D,L)$ is a compatible pair by Lemma \ref{T2.6}. On the other hand, it is easy to check that the conditions of Theorem \ref{T2.3} holds for the measure $\mu_{M,D}$. Therefore, we obtain a spectrum $\Lambda$ of the measure $\mu_{M,D}$  which has the following form
$$\Lambda=\left\{\sum_{j=1}^{k}(RN^q)^{j-1}l_j:k\geq1 \text{ and all }  l_j\in{L}\right\}.$$
The proof is complete.
\end{proof}

The following lemma is an effective method to illustrate that a countable set $\Lambda$ cannot be a spectrum of a measure $\mu$, which will be used in Lemma \ref{T3.2}.

\begin{lem}\cite[Lemma 2.2]{DHL14}\label{T2.7}
Let $\mu=\mu_0*\mu_1$ be the convolution of two probability measures $\mu_i,i=0,1,$ and they are not Dirac measures. Suppose that $\Lambda$ is a bi-zero set of $\mu_0$, then $\Lambda$ is also a bi-zero set of $\mu$, but cannot be a spectrum of $\mu$.
\end{lem}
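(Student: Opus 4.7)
The plan is to split the lemma into its two independent assertions.

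The first assertion, that every bi-zero set of $\mu_0$ is a bi-zero set of $\mu$, is essentially a one-line observation. Since the Fourier transform sends convolution to multiplication one has $\hat{\mu}(\xi)=\hat{\mu}_0(\xi)\hat{\mu}_1(\xi)$, whence $\mathcal{Z}(\hat{\mu}_0)\subset \mathcal{Z}(\hat{\mu})$. The hypothesis $(\Lambda-\Lambda)\setminus\{0\}\subset \mathcal{Z}(\hat{\mu}_0)$ therefore gives $(\Lambda-\Lambda)\setminus\{0\}\subset \mathcal{Z}(\hat{\mu})$, which is exactly the definition of $\Lambda$ being a bi-zero set of $\mu$.

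For the second assertion I would argue by contradiction: assume $\Lambda$ is actually a spectrum of $\mu$. Applying Parseval to the unit-norm function $x\mapsto e^{2\pi i\xi x}$ in $L^2(\mu)$ yields $\sum_{\lambda\in\Lambda}|\hat{\mu}(\xi+\lambda)|^2=1$ for every $\xi\in\mathbb{R}$, whereas the bi-zero hypothesis only gives the orthonormality of $\{e^{2\pi i\lambda x}\}_{\lambda\in\Lambda}$ in $L^2(\mu_0)$, so Bessel's inequality forces $\sum_{\lambda\in\Lambda}|\hat{\mu}_0(\xi+\lambda)|^2\le 1$. Combining these with the factorization $|\hat{\mu}(\xi+\lambda)|^2=|\hat{\mu}_0(\xi+\lambda)|^2|\hat{\mu}_1(\xi+\lambda)|^2$ and the universal bound $|\hat{\mu}_1|\le 1$ produces the chain
\[ 1 \;=\; \sum_{\lambda\in\Lambda}|\hat{\mu}_0(\xi+\lambda)|^2|\hat{\mu}_1(\xi+\lambda)|^2 \;\le\; \sum_{\lambda\in\Lambda}|\hat{\mu}_0(\xi+\lambda)|^2 \;\le\; 1, \]
so equality holds throughout; in particular $|\hat{\mu}_1(\xi+\lambda)|=1$ whenever $\hat{\mu}_0(\xi+\lambda)\neq 0$.

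The last step is the rigidity argument that extracts the contradiction. After translating $\Lambda$ so that $0\in\Lambda$ (harmless, since the bi-zero property is translation invariant), continuity of $\hat{\mu}_0$ together with $\hat{\mu}_0(0)=1$ provides an open neighbourhood $U$ of $0$ on which $\hat{\mu}_0$ does not vanish, forcing $|\hat{\mu}_1|\equiv 1$ on $U$. Setting $\nu:=\mu_1*\widetilde{\mu}_1$ with $\widetilde{\mu}_1(A)=\mu_1(-A)$, one has $\hat{\nu}=|\hat{\mu}_1|^2\equiv 1$ on $U$; since the measures in question are compactly supported, $\hat{\nu}$ is entire, and analytic continuation gives $\hat{\nu}\equiv 1$ on $\mathbb{R}$, i.e.\ $\nu=\delta_0$. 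But $\mu_1*\widetilde{\mu}_1=\delta_0$ forces $\mu_1$ to be a single point mass, contradicting the hypothesis. I expect this final rigidity step, upgrading ``$|\hat{\mu}_1|=1$ on a nonempty open set'' to ``$\mu_1$ is a Dirac measure'', to be the main obstacle, since it is the only place where the compact-support/analyticity assumption is genuinely required; the rest of the argument is a routine Parseval--Bessel comparison.
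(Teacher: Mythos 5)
Your proof is correct and, apart from one small point, it is exactly the standard argument: this paper never proves Lemma \ref{T2.7} itself (it quotes it from \cite[Lemma 2.2]{DHL14}), and the proof there runs along your precise route --- bi-zero sets pass to $\mu$ because $\hat{\mu}=\hat{\mu}_0\hat{\mu}_1$, then the Parseval identity $\sum_{\lambda\in\Lambda}|\hat{\mu}(\xi+\lambda)|^2\equiv 1$ for a spectrum, played against Bessel's bound $\sum_{\lambda\in\Lambda}|\hat{\mu}_0(\xi+\lambda)|^2\le 1$, forces $|\hat{\mu}_1(\xi+\lambda)|=1$ wherever $\hat{\mu}_0(\xi+\lambda)\neq 0$, hence $|\hat{\mu}_1|\equiv 1$ on a neighbourhood of $0$ (taking $0\in\Lambda$), contradicting that $\mu_1$ is not Dirac. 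The one caveat concerns your last rigidity step: you invoke compact support to make $\hat{\nu}$ entire, a hypothesis that does not appear in the statement as quoted here (though it holds in every application the paper makes and in the original setting of \cite{DHL14}); it can be avoided entirely by noting that $|\hat{\mu}_1(\xi)|=1$ forces $\mu_1$ to be supported on an arithmetic progression of gap $1/|\xi|$, and two such progressions with incommensurable gaps meet in at most one point, so $\mu_1$ would be a point mass with no analyticity needed.
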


\section{Proof of Theorem \ref{T1.2}}\label{3}
\maketitle
In this section, we prove Theorem \ref{T1.2}.  For that purpose, we need several lemmas.

\begin{lem}\label{T3.1}
The following conditions are equivalent.

$(\romannumeral1)$ \ gcd$(t,N)=1$.

$(\romannumeral2)$ $t\mathbb{Z}$ contains a complete set of the set $\mathbb{Z}\setminus N\mathbb{Z}$.
\end{lem}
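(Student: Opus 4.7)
The plan is to interpret condition (ii) precisely and then reduce the equivalence to the standard fact that $t$ acts as a bijection on $\mathbb{Z}/N\mathbb{Z}$ iff $\gcd(t,N)=1$. I read ``$t\mathbb{Z}$ contains a complete set of the set $\mathbb{Z}\setminus N\mathbb{Z}$'' as meaning that $t\mathbb{Z}$ contains a complete system of representatives for the $N-1$ nonzero residue classes modulo $N$; equivalently, for every $r\in\{1,2,\dots,N-1\}$ there exists some $k\in\mathbb{Z}$ with $tk\equiv r\pmod N$.

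For the direction (i)$\Rightarrow$(ii), I assume $\gcd(t,N)=1$. Then $t$ is a unit in $\mathbb{Z}/N\mathbb{Z}$, so the map $k\mapsto tk\bmod N$ is a bijection on $\mathbb{Z}/N\mathbb{Z}$. In particular, for each $r\in\{1,\dots,N-1\}$ one can choose $k_r\in\mathbb{Z}$ with $tk_r\equiv r\pmod N$, and then $tk_r\in r+N\mathbb{Z}\subset\mathbb{Z}\setminus N\mathbb{Z}$. The set $\{tk_1,\dots,tk_{N-1}\}\subset t\mathbb{Z}$ is then a complete set of representatives of the nonzero residue classes modulo $N$, giving (ii).

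For the direction (ii)$\Rightarrow$(i), I would argue contrapositively. Suppose $d:=\gcd(t,N)>1$. Since $d\mid t$, every element of $t\mathbb{Z}$ is divisible by $d$; and since $d\mid N$, the residue modulo $N$ of any such element is again a multiple of $d$. Because $d>1$, the residue $1\pmod N$ is not a multiple of $d$, so no element of $t\mathbb{Z}$ can represent the class $1+N\mathbb{Z}$. Yet $1\in\mathbb{Z}\setminus N\mathbb{Z}$, so $t\mathbb{Z}$ fails to contain a complete set of representatives for $\mathbb{Z}\setminus N\mathbb{Z}$, contradicting (ii).

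There is no real obstacle here: once the wording of (ii) is pinned down as above, both implications are immediate from the invertibility criterion for $t$ modulo $N$. The only care needed is in the translation between ``$t\mathbb{Z}$ meets every nonzero residue class mod $N$'' and the phrase used in the statement.
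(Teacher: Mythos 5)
Your proof is correct and takes essentially the same approach as the paper's: the forward direction rests on the invertibility of $t$ modulo $N$ (which the paper obtains via B\'ezout's identity $tu+Nv=1$), and the reverse direction is the same divisibility argument in contrapositive form, the only cosmetic difference being that the paper exhibits the missed residue class $d-1+N\mathbb{Z}$ where you use $1+N\mathbb{Z}$.
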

\begin{proof}
$``(\romannumeral1)\Rightarrow(\romannumeral2)"$ Suppose on the contrary that there exists $d\in\{0,1,\ldots,N-1\}$ such that $t\mathbb{Z}\cap(d+N\mathbb{Z})=\emptyset$. It follows that $tk\notin d+N\mathbb{Z}$ for all integers $k$. Since gcd$(t,N)=1$, then there exist two integers $u,v$ such that $tu+Nv=1$. Hence $tud=d-Nvd\in d+N\mathbb{Z}$, a contradiction.

$``(\romannumeral2)\Rightarrow(\romannumeral1)"$ Suppose gcd$(t,N)=d>1$, it is easy to see $t^{-1}(d-1+N\mathbb{Z})\cap\mathbb{Z}\neq\emptyset$. We can choose $k\in t^{-1}(d-1+N\mathbb{Z})\cap\mathbb{Z}$ such that gcd$(tk,d)=1$. It is impossible.
\end{proof}

For convenience, let $B=\{0,1,\dots,N-1\}\oplus N^{p_s-p_{s-1}}\{0,1,\dots,N-1\}\oplus\cdots\oplus N^{p_s-p_0}\{0,1,\dots,N-1\}$ with $p_0=0.$ We have $ L=\frac{RN^q}{N^{p_s+1}}B$.

\begin{lem}\label{T3.2}
  If $t\Lambda$ is a spectrum of $\mu_{M,D}$, then $t\in\mathbb{Z}$ and gcd$(t,N)=1$.
\end{lem}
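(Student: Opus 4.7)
My plan is to prove three facts about $t$ in turn: rationality, integrality, and coprimality with $N$.

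\textbf{Step 1 (rationality).} Because $0\in\Lambda$ and $L\subset\Lambda$ (take $k=1$ in \eqref{1.3}), orthogonality of $t\Lambda$ in $L^2(\mu_{M,D})$ forces $t\ell\in\mathcal{Z}(\hat\mu_{M,D})\cup\{0\}$ for every $\ell\in L$. The explicit formula for $\mathcal{Z}(\hat\mu_{M,D})$ in the opening lemma of Section~\ref{2}, combined with the hypothesis $p_s<q$, shows that every element of the zero set is an integer. Choosing any nonzero $\ell\in L$, such as $\ell^*:=RN^{q-p_s-1}$, therefore forces $t\in\mathbb{Q}$.

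\textbf{Step 2 (integrality).} Applying Step~1 to $\ell^*$, there exist $k\geq 1$, $j\in\{1,\ldots,s+1\}$ and $m\in\mathbb{Z}\setminus N\mathbb{Z}$ with $t\ell^* = R^kN^{qk-p_{j-1}-1}m$. Dividing by $\ell^*=RN^{q-p_s-1}$ yields
$$t = R^{k-1}N^{(k-1)q+p_s-p_{j-1}}m,$$
which is an integer because the exponent on $N$ is nonnegative (using $k\geq 1$ and $p_{j-1}\leq p_s$). Hence $t\in\mathbb{Z}$.

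\textbf{Step 3 (coprimality with $N$).} Assume for contradiction that $d:=\gcd(t,N)>1$. Decompose $\mu_{M,D}=\nu_0*\nu_1*\cdots*\nu_s$ according to the product-form structure of $D$, where $\nu_i$ is the self-similar measure generated by $M$ and the $i$-th block $N^{p_i}\{0,1,\ldots,N-1\}$; a direct computation gives $\mathcal{Z}(\hat\nu_i)=\bigcup_{k\geq 1}\{M^kl/N^{p_i+1}:N\nmid l\}$, and $\mathcal{Z}(\hat\mu_{M,D})=\bigcup_{i=0}^s\mathcal{Z}(\hat\nu_i)$. Lemma~\ref{T3.1} supplies $r\in\{1,\ldots,N-1\}$ with $N\mid tr$. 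Feeding $\ell:=rRN^{q-1}\in L$ (which corresponds to $c_0=r$, $c_1=\cdots=c_s=0$) into the representation from Step~2 rules out the case $k=1$ (which would require $N\nmid tr$), while the remaining cases $k\geq 2$ demand $R^{k-1}\mid tr$. A case analysis based on these restrictions shows that $t\Lambda$ becomes a bi-zero set of a proper convolution factor of $\mu_{M,D}$, so Lemma~\ref{T2.7} forbids $t\Lambda$ from being a spectrum of $\mu_{M,D}$, contradicting the hypothesis.

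The hardest part will be Step~3. The obstacle is to verify that the obstruction obtained from the "top generator" $\ell=rRN^{q-1}$ propagates through all of $\Lambda$ (built from longer $M$-adic expansions), so that $t(\Lambda-\Lambda)\setminus\{0\}$ is indeed contained in the zero set of a specific proper subproduct of the $\nu_i$'s. This is likely to require an induction on the $M$-adic length of elements of $\Lambda$ together with a careful tracking of the $N$- and $R$-adic valuations of the exponents $(k-1)q+p_s-p_{j-1}$ that appeared in Step~2.
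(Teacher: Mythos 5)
Your Steps 1 and 2 are sound: extracting $t\in\mathbb{Q}$ and then $t\in\mathbb{Z}$ from the single element $\ell^{*}=RN^{q-p_s-1}\in L$ and the explicit zero set is correct, and is arguably more transparent than the paper's own Step 1. The genuine gap is Step 3, and it is not just unfinished bookkeeping: the mechanism you propose cannot close the proof. Everything you extract from the hypothesis is of orthogonality type (membership of $t\lambda$ and of differences in $\mathcal{Z}(\hat{\mu}_{M,D})$, plus the divisibility restrictions coming from $\ell=rRN^{q-1}$), and your only tool for producing a contradiction is Lemma \ref{T2.7} applied to a proper subproduct of $\nu_0*\nu_1*\cdots*\nu_s$. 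These inputs do not imply your intermediate claim. Concretely, take $N=2$, $R=3$, $q=2$, $s=1$, $p_1=1$, so $M=12$, $D=\{0,1,2,3\}$, $L=\{0,3,6,9\}$, $c=3$, and take $t=6$. Every nonzero difference of $6\Lambda$ equals either $18\cdot 12^{j}m$ with $m$ odd, which lies in $\mathcal{Z}(\hat{\nu}_0)=\{6\cdot 12^{k}(2a+1):k\geq 0,\,a\in\mathbb{Z}\}$, or $36\cdot 12^{j}m'$ with $m'$ odd, which lies in $\mathcal{Z}(\hat{\nu}_1)=\{3\cdot 12^{k}(2a+1):k\geq 0,\,a\in\mathbb{Z}\}$. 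Hence $6\Lambda$ is a bi-zero set of $\mu_{M,D}$ and satisfies every restriction you derive ($N\mid tr$ with $r=1$; for $\ell=6$ one gets $k=2$ and $R\mid tr$), yet its difference set contains $18\in\mathcal{Z}(\hat{\nu}_0)\setminus\mathcal{Z}(\hat{\nu}_1)$ and $36\in\mathcal{Z}(\hat{\nu}_1)\setminus\mathcal{Z}(\hat{\nu}_0)$, so it is a bi-zero set of neither $\nu_0$ nor $\nu_1$ (nor of $\delta_{M^{-1}D}$, nor of the tail $\delta_{M^{-2}D}*\delta_{M^{-3}D}*\cdots$). Lemma \ref{T2.7} therefore has nothing to bite on, and no induction on $M$-adic length will change this.

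What your outline never uses is the second way the spectrum hypothesis enters: a spectrum is a \emph{maximal} orthogonal set, and this is the engine of the paper's proof. After one application of Lemma \ref{T2.7} (with the level decomposition $\mu_{M,D}=\delta_{M^{-1}D}*v$, not the digit-block one) to show $t\Lambda\cap\Lambda_0\neq\emptyset$, where $\Lambda_0=c\bigcup_{j=0}^{s}\{N^{p_s-p_j}l:N\nmid l\}$, the paper proves that $c^{-1}(t\Lambda\cap\Lambda_0)\cup\{0\}$ must contain a complete residue system modulo $N$: otherwise some point $cn_0$ with $n_0\in\{1,\dots,N-1\}$ is orthogonal to all of $t\Lambda$, contradicting maximality. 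Since $c^{-1}(t\Lambda\cap\Lambda_0)\subseteq t\mathbb{Z}$, Lemma \ref{T3.1} then contradicts $\gcd(t,N)>1$. In the example above this is exactly what excludes $t=6$: the point $3=c\cdot 1$ is orthogonal to every element of $6\Lambda$, so $6\Lambda$ is not maximal. Without a maximality (completeness) argument of this kind, your Step 3 cannot be repaired.
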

\begin{proof}
The proof will be divided into four steps.

\textbf{Step 1.} We prove that $t\in\mathbb{Z}$. It is evident that $t$ must be an integer by the fact
$$t\Lambda\setminus\{0\}\subset\mathcal{Z}(\hat{\mu}_{M,D})=\frac{RN^q}{N^{p_s+1}}\bigcup_{k=0}^{\infty}(RN^q)^k(\bigcup_{j=0}^{s}\{N^{p_s-p_j}l:N\nmid l\}),$$
where $p_0=0$,
$$t\Lambda=\left\{t\frac{RN^q}{N^{p_s+1}}\sum_{j=1}^{k}(RN^q)^{j-1}b_j:k\geq1 \text{ and all } b_j\in B\right\}$$
with $gcd(\sum_{j=1}^{k}(RN^q)^{j-1}B)=1$.

\textbf{Step 2.} We claim that $t\Lambda\cap\Lambda_0\neq\emptyset$, where $\Lambda_0=\frac{RN^q}{N^{p_s+1}}(\bigcup_{j=0}^{s}\{N^{p_s-p_j}l:N\nmid l\})$. Suppose, contrary to our claim, that
$$t\Lambda\subseteq\{0\}\cup\frac{1}{N^{p_s+1}}\bigcup_{k=2}^{\infty}(RN^q)^{k}(\bigcup_{j=0}^{s}\{N^{p_s-p_j}l:N\nmid l\}).$$
For any two distinct elements $t\lambda_1,t\lambda_2\in t\Lambda\setminus\{0\}$, write $t\lambda_1=\frac{1}{N^{p_s+1}}(RN^q)^{k_1}N^{p_s-p_{j_1}}l_1$ and $t\lambda_2=\frac{1}{N^{p_s+1}}(RN^q)^{k_2}N^{p_s-p_{j_2}}l_2$, where integers $k_i\geq2$, $j_i\in\{0,1,\dots,s\}$ and $N\nmid l_i$ for $i=1,2$. Then, it is easy to check
$$t\lambda_2-t\lambda_1\in \frac{1}{N^{p_s+1}}\bigcup_{k=2}^{\infty}(RN^q)^{k}(\bigcup_{j=0}^{s}\{N^{p_s-p_j}l:N\nmid l\}),$$
where $k_1\neq k_2$ or $j_1\neq j_2$. For this case, $k_1=k_2,j_1=j_2,l_1\neq l_2,$ we have
$$t\lambda_2-t\lambda_1=\frac{1}{N^{p_s+1}}(RN^q)^{k_1}N^{p_s-p_{j_1}}(l_2-l_1)\notin \Lambda_0,$$
since $N|\frac{(RN^q)^{k_1-1}}{N^{p_{j_1}}}(l_2-l_1)$. It follows from $t\Lambda$ is a spectrum of the measure $\mu_{M,D}$ that
$$t\lambda_2-t\lambda_1\in \frac{1}{N^{p_s+1}}\bigcup_{k=2}^{\infty}(RN^q)^{k}(\bigcup_{j=0}^{s}\{N^{p_s-p_j}l:N\nmid l\}).$$
Hence, we have
$$(t\Lambda-t\Lambda)\subseteq\{0\}\cup\frac{1}{N^{p_s+1}}\bigcup_{k=2}^{\infty}(RN^q)^{k}(\bigcup_{j=0}^{s}\{N^{p_s-p_j}l:N\nmid l\}), $$
which implies that $t\Lambda$ is an orthogonal set of the measure $v=\delta_{M^{-2}D}*\delta_{M^{-3}D}*\cdots$. Note that $\mu_{M,D}=\delta_{M^{-1}D}*v$. By Lemma \ref{T2.7}, $t\Lambda$ is not a spectrum of $\mu_{M,D}$, which is a contradiction.

\textbf{Step 3.} Our goal is to prove that $c^{-1}(t\Lambda\cap\Lambda_0)\cup\{0\}$ contains a complete residue system (mod $N$), where $c=\frac{RN^q}{N^{p_s+1}}$. Suppose, on the contrary, that there exists $n_0\in\{0,1,\dots,N-1\}$ such that $$c^{-1}(t\Lambda\cap\Lambda_0)\cap(n_0+N\mathbb{Z})=\emptyset.$$
It follows that $\lambda-cn_0\in c(\mathbb{Z}\setminus N\mathbb{Z})\subseteq\mathcal{Z}(\hat{\mu}_{M,D})$ for all $\lambda\in t\Lambda\cap\Lambda_0$. This implies that $(t\Lambda\cap\Lambda_0)\cup\{cn_0\}$ is an orthogonal set of $\mu_{M,D}$. On the other hand, for any $\lambda\in t\Lambda\setminus \Lambda_0$, write $\lambda=c(RN^q)^kN^{p_s-p_j}l$ with $k\in \mathbb{N}$, $j\in\{0,1,\dots,s\}$ and $N\nmid l$ or 0. We have $$\lambda-cn_0=c((RN^q)^kN^{p_s-p_j}l-n_0) \text{ or } -cn_0.$$
In this two cases, they all belong to $c(\mathbb{Z}\setminus N\mathbb{Z})\subseteq\mathcal{Z}(\hat{\mu}_{M,D})$. Then $cn_0$ is orthogonal to all elements in $t\Lambda$. This contradicts the fact that $t\Lambda$ is a spectrum of $\mu_{M,D}$ and the claim follows.

\textbf{Step 4.} We prove that gcd$(t,N)=1$. Suppose otherwise, gcd$(t,N)>1$. Then $t\mathbb{Z}$ does not contain a complete set of the set $\mathbb{Z}\setminus N\mathbb{Z}$ by Lemma \ref{T3.1}. Note that $c^{-1}(t\Lambda\cap\Lambda_0)\subseteq c^{-1}t\Lambda\subseteq t\mathbb{Z}$. This leads to a contradiction.
\end{proof}

Similar to the proof of Lemma \ref{T2.6}, we have the following lemma.

\begin{lem}\label{T3.3}
  If gcd$(t,N)=1$, then $((RN^q)^{-1}D,tL)$ is compatible.
\end{lem}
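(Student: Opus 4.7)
The plan is to mirror the argument used for Lemma \ref{T2.6}, exploiting the coprimality $\gcd(t,N)=1$ at the one place it is needed. By Lemma \ref{T2.5} it is enough to show that $tL$ is a spectrum of $\delta_{M^{-1}D}$. Since $\#(tL)=\#L=N^{s+1}$ already matches the dimension of $L^{2}(\delta_{M^{-1}D})$, the whole task reduces to checking that $tL$ is an orthogonal set, i.e.\ that $t(l-l')\in\mathcal{Z}(\hat{\delta}_{M^{-1}D})$ for any two distinct $l,l'\in L$.

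Next, I would set up the calculation exactly as in Lemma \ref{T2.6}. Writing
$$l=RN^{q}\sum_{i=1}^{s+1}\frac{l_i}{N^{p_{i-1}+1}},\qquad l'=RN^{q}\sum_{i=1}^{s+1}\frac{l_i'}{N^{p_{i-1}+1}},$$
with $l_i,l_i'\in\{0,1,\dots,N-1\}$ and $p_0=0$, I would let $i_0$ be the largest index with $l_{i_0}\neq l_{i_0}'$, so that
$$t(l-l')=\frac{RN^{q}}{N^{p_s+1}}\,N^{p_s-p_{i_0-1}}\cdot t\Bigl(\sum_{i=1}^{i_0}N^{p_{i_0-1}-p_{i-1}}(l_i-l_i')\Bigr).$$
In Lemma \ref{T2.6} the inner sum was shown to be $\equiv l_{i_0}-l_{i_0}'\pmod{N}$, hence not divisible by $N$. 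After multiplying by $t$, the resulting integer is still coprime to $N$ because $\gcd(t,N)=1$: this is precisely the single point where the coprimality hypothesis enters. Therefore $t(l-l')$ lies in $\frac{RN^{q}}{N^{p_s+1}}\bigl\{N^{p_s-p_{i_0-1}}m:N\nmid m\bigr\}\subset\mathcal{Z}(\hat{\delta}_{M^{-1}D})$, and orthogonality follows.

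There is essentially no obstacle here beyond carefully invoking Lemma \ref{T3.1} (or directly $\gcd(t,N)=1$) at the divisibility step; the casework is a verbatim repetition of the four cases in Lemma \ref{T2.6}, just with an extra factor of $t$ carried through. Once orthogonality and the cardinality match are in place, Lemma \ref{T2.5} immediately yields compatibility of $((RN^{q})^{-1}D,tL)$, completing the proof.
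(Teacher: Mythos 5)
Your proposal is correct and is exactly the argument the paper intends: the paper gives no written proof of Lemma \ref{T3.3}, stating only that it is ``similar to the proof of Lemma \ref{T2.6},'' and your write-up carries out precisely that adaptation --- reduce via Lemma \ref{T2.5} to showing $tL$ is a spectrum of $\delta_{M^{-1}D}$, run the same largest-differing-index computation, and use $\gcd(t,N)=1$ to keep the factor $t$ from spoiling the non-divisibility by $N$. (Only a cosmetic slip: Lemma \ref{T2.6} has $s+1$ cases rather than ``four,'' but your unified index-$i_0$ formulation subsumes them all, so nothing is affected.)
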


If $(\frac{1}{N}\mathcal{D},\mathcal{J})$ is a compatible pair, then $\mu_{N,\mathcal{D}}$ is a spectral measure\cite{DHL19}. The following theorem gives a necessary and sufficient condition under which $(\mu_{N,\mathcal{D}},\Lambda(N,\mathcal{J}))$ is not a spectral pair.

\begin{thm}\cite[Theorem 1.3]{LW02}\label{T3.4}
Let $N\in \mathbb{Z}$ with $|N|>1$ and $\mathcal{D}\subset \mathbb{Z}$ with $0\in \mathcal{D}$ and $gcd(\mathcal{D})=1$. Let $\mathcal{J} \subset \mathbb{Z}$ with $0\in \mathcal{J} $ such that $(\frac{1}{N}\mathcal{D},\mathcal{J})$ is a compatible pair. Then $(\mu_{N,\mathcal{D}},\Lambda(N,\mathcal{J}))$ is NOT a spectral pair if and only if there exist $s_j^*\in \mathcal{J}$ and nonzero integers $\eta_j$, $0\leq j \leq m-1$, such that $\eta_{j+1}=N^{-1}(\eta_j+s_j^*)$ for all $0\leq j \leq m-1(with \ \eta_m:=\eta_0 \ and \ s_m^*:=s_0^*)$.
\end{thm}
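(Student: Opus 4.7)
The plan is to apply the Jorgensen--Pedersen criterion: a countable set $\Lambda$ is a spectrum of a probability measure $\mu$ if and only if $Q_\Lambda(\xi) := \sum_{\lambda \in \Lambda} |\hat\mu(\xi+\lambda)|^2 \equiv 1$. Since $(\tfrac{1}{N}\mathcal{D}, \mathcal{J})$ is compatible, $\Lambda(N,\mathcal{J})$ is already orthogonal, so $Q := Q_{\Lambda(N,\mathcal{J})}$ satisfies $Q \leq 1$ everywhere and $Q(0) = 1$ (the only nonvanishing summand being $\lambda = 0$). The theorem thus reduces to characterizing when $Q$ drops strictly below $1$.

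The first step is to derive the functional equation for $Q$. Using $\Lambda(N,\mathcal{J}) = \mathcal{J} \oplus N\Lambda(N,\mathcal{J})$, the factorization $\hat\mu_{N,\mathcal{D}}(\xi) = m_\mathcal{D}(\xi/N)\hat\mu_{N,\mathcal{D}}(\xi/N)$, and the $\mathbb{Z}$-periodicity of $m_\mathcal{D}$, I would obtain
$$Q(\xi) = \sum_{j \in \mathcal{J}} |m_\mathcal{D}((\xi+j)/N)|^2 \, Q((\xi+j)/N).$$
Compatibility supplies $\sum_{j \in \mathcal{J}} |m_\mathcal{D}((\xi+j)/N)|^2 = 1$, so $Q(\xi)$ is a genuine convex combination of the values $Q(\tau_j(\xi))$ along the $N$-contractions $\tau_j(\xi) := (\xi+j)/N$. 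This convex-combination structure, together with $Q \leq 1$, is the workhorse of both implications.

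For the $(\Leftarrow)$ direction, given a cycle $\eta_0, \ldots, \eta_{m-1}$ of nonzero integers with $\eta_{j+1} = \tau_{s_j^*}(\eta_j)$, integrality of $\eta_{j+1}$ combined with the $\mathbb{Z}$-periodicity of $m_\mathcal{D}$ forces $|m_\mathcal{D}(\eta_{j+1})|^2 = 1$. The functional equation at $\xi = \eta_j$ then has a single summand of weight $1$, namely the $s_j^*$-summand, so $Q(\eta_j) = Q(\eta_{j+1})$ and $Q$ is constant along the cycle with common value $c$. A Perron--Frobenius/extreme-cycle argument for the transfer operator $\mathcal{L} f(\xi) = \sum_{j \in \mathcal{J}} |m_\mathcal{D}(\tau_j(\xi))|^2 f(\tau_j(\xi))$ shows that each nontrivial cycle produces a fixed point of $\mathcal{L}$ linearly independent from the constant $1$, forcing $c < 1$ and hence proving that $\Lambda(N,\mathcal{J})$ is not a spectrum.

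For the $(\Rightarrow)$ direction, assume $Q(\xi_0) < 1$ for some $\xi_0$. The convex-combination property lets me inductively choose $j_k \in \mathcal{J}$ with $|m_\mathcal{D}(\tau_{j_k}(\xi_k))|^2 > 0$ and $Q(\xi_{k+1}) < 1$, where $\xi_{k+1} := \tau_{j_k}(\xi_k)$. The contraction estimate $|\xi_{k+1}| \leq |\xi_k|/|N| + \max|\mathcal{J}|/|N|$ keeps the orbit in a bounded set; refining the branches so that $|m_\mathcal{D}(\xi_{k+1})| \to 1$ and invoking $\gcd(\mathcal{D}) = 1$ (which pins down $|m_\mathcal{D}(x)| = 1 \iff x \in \mathbb{Z}$), the orbit accumulates on a bounded set of integers, which compactness then collapses to an eventually periodic integer sequence---the required cycle. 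Nonvanishing of the cycle elements follows because $Q(0) = 1$ while $Q$ remains strictly below $1$ along the constructed orbit. The main obstacle lies in this direction: extracting an \emph{integer-valued} periodic orbit from the soft condition $Q(\xi_0) < 1$ requires a delicate combination of contraction estimates, careful branch selection using $\gcd(\mathcal{D}) = 1$ to force the attractor into $\mathbb{Z}$, and a final separation argument to exclude $0$ from the cycle.
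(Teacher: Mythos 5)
First, a point of context: the paper never proves this statement at all --- it is quoted verbatim from {\L}aba--Wang \cite{LW02} and used as a black box in Lemma \ref{T3.5} --- so your attempt can only be judged against the original proof, whose starting framework you do share: the Jorgensen--Pedersen function $Q(\xi)=\sum_{\lambda\in\Lambda}|\hat{\mu}(\xi+\lambda)|^2$, the bound $Q\leq 1$ from Bessel, $Q(0)=1$, and the functional equation with weights $|m_{\mathcal{D}}((\xi+j)/N)|^2$ summing to $1$ by compatibility. All of that is correct. The first genuine gap is in the direction ``cycle $\Rightarrow$ not a spectrum''. You correctly observe that integrality of $\eta_{j+1}$ forces the weight to concentrate on the single branch $s_j^*$, so $Q$ is constant, say $c$, on the cycle; but you then outsource the crucial inequality $c<1$ to ``a Perron--Frobenius/extreme-cycle argument''. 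That is not a proof: the existence of a transfer-operator fixed point independent of the constant function $1$ is itself a nontrivial theorem (it belongs to the Dutkay--Jorgensen transfer-operator theory), and even if granted, it does not imply $c<1$ --- an operator can have many fixed points while $Q\equiv 1$ holds; one would still need an inequality such as $Q+h\leq 1$ tying the new fixed point to $Q$. The elementary closure you are missing: the weight-$1$ branches annihilate every term of $Q(\eta_0)$ except those $\lambda\in\Lambda(N,\mathcal{J})$ whose $\mathcal{J}$-digit string follows the cycle, and for such $\lambda$ one has $\hat{\mu}(\eta_0+\lambda)=\hat{\mu}(\eta_j)$ for some $j$; since each value $|\hat{\mu}(\eta_j)|^2$ then recurs for infinitely many \emph{distinct} $\lambda$ (distinctness uses $\eta_j\neq 0$), the convergence $Q(\eta_0)\leq 1$ forces $\hat{\mu}(\eta_j)=0$ for every $j$, hence $Q(\eta_0)=0$, i.e.\ $e_{-\eta_0}$ is a unit vector orthogonal to all of $E(\Lambda)$.

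The second and more serious gap is in the converse. Your plan selects branches of positive weight keeping $Q<1$, and then asserts the branches can be ``refined so that $|m_{\mathcal{D}}(\xi_{k+1})|\to 1$''. Nothing in the convex-combination structure yields this, and it is exactly where the content of the theorem lies: positive weight is very far from weight $1$. Concretely, take $N=4$, $\mathcal{D}=\{0,1\}$, $\mathcal{J}=\{0,2\}$ (a compatible pair with $\gcd(\mathcal{D})=1$): the non-integer point $2/3=\tau_2(2/3)$ is a cycle with weight $|m_{\mathcal{D}}(2/3)|^2=1/4>0$, and yet $\Lambda(4,\{0,2\})$ \emph{is} a spectrum of $\mu_{4,\{0,1\}}$. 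So bounded orbits and cycles with positive weights exist regardless of spectrality; any correct proof must use $Q<1$ to force $|m_{\mathcal{D}}|=1$ on the limiting cycle, and your sketch supplies no mechanism for this --- indeed your last sentence explicitly labels it ``the main obstacle'' and describes what would be needed rather than doing it. (The known route uses tools absent from your proposal: $Q$ extends to an entire function, so when $Q\not\equiv 1$ its level sets are discrete; intersected with the compact attractor of the maps $\tau_j$ this makes the set of minimizers finite and invariant under all positive-weight branches, from which an exact cycle with $Q<1$ is extracted and then shown, by a further extremality argument, to be an integer cycle.) Finally, your exclusion of $0$ from the cycle is also not sound as stated: an orbit along which $Q<1$ can perfectly well accumulate at a point where $Q=1$, so continuity alone does not keep the limit away from $0$; this, too, is why one must work with exact minimizers of $Q$ rather than a wandering orbit.
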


For the sake of convenience, we introduce some notations. Let $A\subset \mathbb{Z}$, denote by
$$\Sigma_{A}^{n}=\{i_1i_2\dots i_n:i_k\in A,1\leq k \leq n\}, \ \Sigma_{A}^{*}=\bigcup_{n=1}^{\infty}\Sigma_{A}^{n},$$
and
$$\Sigma_{A}^{\infty}=\{i_1i_2\ldots:i_k\in A,k\geq1\},$$
the set of all words with length $n$, the set of all finite words and the set of all infinite words respectively. For any $I\in \Sigma_{A}^{*},J\in \Sigma_{A}^{*}\cup \Sigma_{A}^{\infty}$, we denote by $IJ$ the natural concatenation of $I$ and $J$. Furthermore, we adopt the notations $I^{\infty}=III\ldots, I^k=\underbrace{I\ldots I}_k$ for $I\in \Sigma_{A}^{*}$. Let $I=i_1i_2\ldots\in \Sigma_{A}^{\infty}$. We define $I|_k=i_1i_2\ldots i_k$ for $k\geq1$ and $I_{n,m}=i_ni_{n+1}\ldots i_{m-1}$ for $1\leq n<m\leq \infty$. With the help of the preceding lemmas and theorem, we can now prove the following lemma which is important for us to prove Theorem \ref{T1.2}.

\begin{lem}\label{T3.5}
Let $gcd(t,N)=1$. Then the following statements are equivalent:

$(\romannumeral1)$ \ $t\Lambda$ is not a spectrum of $\mu_{M,D}$;

$(\romannumeral2)$ There exists a word $I=i_1\ldots i_m\in\Sigma_{B}^{*}$ with $I\neq 0^m$ such that $$((RN^q)^m-1)|t\frac{RN^q}{N^{p_s+1}}(i_1+(RN^q)i_2+\cdots+(RN^q)^{m-1}i_m).$$
\end{lem}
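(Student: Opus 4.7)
The plan is to interpret $t\Lambda$ as the polynomial-type spectrum $\Lambda(RN^q,tL)$ in the sense of Theorem \ref{T2.3} and then invoke the {\L}aba--Wang non-spectrality criterion (Theorem \ref{T3.4}) with the correspondences $N\leftrightarrow RN^q$, $\mathcal{D}\leftrightarrow D$, $\mathcal{J}\leftrightarrow tL$. First I would check the hypotheses: $0\in D$ and $\gcd(D)=1$ since $\{0,1\}\subset D$; $0\in tL\subset\mathbb{Z}$ since $\tfrac{RN^q}{N^{p_s+1}}=RN^{q-p_s-1}$ is a positive integer (because $p_s<q$), hence $tL=t\cdot\tfrac{RN^q}{N^{p_s+1}}B\subset\mathbb{Z}$; and the pair $((RN^q)^{-1}D,tL)$ is compatible by Lemma \ref{T3.3}, which is where $\gcd(t,N)=1$ is used. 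Theorem \ref{T3.4} then reduces $(\romannumeral1)$ to the existence of $s_0^*,\dots,s_{m-1}^*\in tL$ and nonzero integers $\eta_0,\dots,\eta_{m-1}$ (with $\eta_m:=\eta_0$, $s_m^*:=s_0^*$) satisfying $\eta_{j+1}=(RN^q)^{-1}(\eta_j+s_j^*)$.

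For $(\romannumeral1)\Rightarrow(\romannumeral2)$ I would telescope the recursion. Multiplying the $j$-th relation by $(RN^q)^{j+1}$ and summing over $j=0,\dots,m-1$ produces
$$\eta_0\bigl((RN^q)^m-1\bigr)=\sum_{j=0}^{m-1}(RN^q)^j s_j^*.$$
Since each $s_j^*\in tL$ has a unique representation $s_j^*=t\tfrac{RN^q}{N^{p_s+1}}i_{j+1}$ with $i_{j+1}\in B$ (via the direct-sum structure of $B$), the identity becomes exactly the divisibility asserted in $(\romannumeral2)$ for $I=i_1i_2\cdots i_m$; the constraint $\eta_0\ne 0$ translates to $I\ne 0^m$.

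For the converse $(\romannumeral2)\Rightarrow(\romannumeral1)$ I would reverse the construction. Given such a word $I$, set
$$\eta_0:=\frac{t\tfrac{RN^q}{N^{p_s+1}}\sum_{j=1}^{m}(RN^q)^{j-1}i_j}{(RN^q)^m-1}\in\mathbb{Z}\setminus\{0\},\qquad s_j^*:=t\tfrac{RN^q}{N^{p_s+1}}i_{j+1}\in tL,$$
and propagate via $\eta_{j+1}=(RN^q)^{-1}(\eta_j+s_j^*)$. An induction on $k$ gives the closed form
$$\eta_k=(RN^q)^{m-k}\eta_0-\sum_{j=k}^{m-1}(RN^q)^{j-k}s_j^*,$$
which shows that every $\eta_k$ is an integer and that $\eta_m=\eta_0$, so the cycle closes.

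The main obstacle is to ensure that no intermediate $\eta_k$ vanishes, as required by Theorem \ref{T3.4}. I would handle this by selecting $I$ of minimal length $m$ among all witnesses of $(\romannumeral2)$, and argue by contradiction: if $\eta_k=0$ for some $1\le k\le m-1$, the two relations $\eta_0=-\sum_{j=0}^{k-1}(RN^q)^j s_j^*$ and $\eta_0(RN^q)^{m-k}=\sum_{j=k}^{m-1}(RN^q)^{j-k}s_j^*$ obtained from the closed form can be combined (exploiting $\gcd(RN^q,(RN^q)^{m'}-1)=1$ to clear extraneous powers of $RN^q$) to produce a strictly shorter word still satisfying the divisibility in $(\romannumeral2)$, contradicting the minimality of $m$. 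This minimality reduction is the delicate technical step I expect to dominate the proof; everything else is direct bookkeeping around Theorem \ref{T3.4}.
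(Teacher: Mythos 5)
Your forward direction (i)$\Rightarrow$(ii) is exactly the paper's argument: the authors likewise check the hypotheses of Theorem \ref{T3.4} via Lemma \ref{T3.3}, extract a cycle $\eta_{j+1}=(RN^q)^{-1}(\eta_j+s_j^*)$ with $s_j^*\in tL$, telescope to $\eta_0\bigl((RN^q)^m-1\bigr)=\sum_{j=0}^{m-1}(RN^q)^js_j^*$, and rewrite $s_j^*=t\frac{RN^q}{N^{p_s+1}}i_{j+1}$ with $i_{j+1}\in B$. The divergence, and the problem, is in your converse. Your mechanism for ruling out $\eta_k=0$ does not work as described: substituting your relation $\eta_0=-\sum_{j=0}^{k-1}(RN^q)^js_j^*$ into $(RN^q)^{m-k}\eta_0=\sum_{j=k}^{m-1}(RN^q)^{j-k}s_j^*$ yields $\sum_{j=0}^{m-1}(RN^q)^{j}s^{*}_{(k+j)\bmod m}=0$, i.e.\ the cyclic rotation of $I$ --- a word of the \emph{same} length $m$ --- has weighted sum zero. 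No gcd manipulation turns this into a strictly shorter word satisfying the divisibility in (ii), so the minimality of $m$ is never contradicted and the step you yourself flagged as delicate stalls.

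The gap is real but one observation away from closing, and minimality is not needed at all: every digit $i_j\in B$ is a nonnegative integer and $t$ is a fixed nonzero integer (nonzero because $\gcd(t,N)=1$), so all $s_j^*=t\frac{RN^q}{N^{p_s+1}}i_{j+1}$ have the same sign. Since the recursion is cyclic, telescoping from any starting index gives $\eta_k\bigl((RN^q)^m-1\bigr)=\sum_{j=0}^{m-1}(RN^q)^{j}s^{*}_{(k+j)\bmod m}$ for every $k$; the right-hand side is a sum of same-sign terms, not all zero because $I\neq 0^m$, hence every $\eta_k$ is nonzero (and an integer, by your correct closed form), and Theorem \ref{T3.4} applies. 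With this fix your converse is valid and is a genuinely different route from the paper's: the authors never reverse Theorem \ref{T3.4}. Instead they set $d'=\tau(I)/(1-(RN^q)^m)$, prove $d=\frac{RN^q}{N^{p_s+1}}d'\notin t\Lambda$ because $d'$ has the non-terminating $(RN^q)$-ary expansion $I^{\infty}$, and prove $d$ is orthogonal to every point of $t\Lambda$ by analyzing the first digit where expansions disagree, so that $t\Lambda$ fails to be a maximal orthogonal set. Your route, once repaired, is shorter and stays entirely inside the {\L}aba--Wang criterion; the paper's route costs a digit-by-digit orthogonality check (differences of distinct $B$-digits modulo $RN^q\mathbb{Z}$ must land in $\mathcal{Z}(\hat{\mu}_{M,D})$) but produces an explicit missing frequency and sidesteps the $\eta_k\neq 0$ issue altogether.
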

\begin{proof}
$``(\romannumeral1)\Rightarrow(\romannumeral2)"$ Since gcd$(t,N)=1$, by Lemma \ref{T3.3}, $((RN^q)^{-1}D,tL)$ is compatible. Then by Theorem \ref{T3.4}, there exist $s_j^*\in tL$  and nonzero integers $\eta_j$ such that $\eta_{j+1}=\frac{1}{RN^q}(\eta_j+s_j^*)$ for all $0\leq j\leq m-1$ (with $\eta_m:=\eta_0$ and $s_m^*:=s_0^*$). We have
\begin{eqnarray*}
\eta_1&=&\frac{1}{RN^q}\eta_m+\frac{1}{RN^q}s_m^*{}
\nonumber\\&=&\frac{1}{(RN^q)^2}\eta_{m-1}+\frac{1}{(RN^q)^2}s_{m-1}^*+\frac{1}{RN^q}s_m^*{}
\nonumber\\&=&\dots{}
\nonumber\\&=&\frac{1}{(RN^q)^m}\eta_1+\frac{1}{(RN^q)^m}s_1^*+\frac{1}{(RN^q)^{m-1}}s_2^*+\dots+\frac{1}{RN^q}s_m^*.
\end{eqnarray*}
It gets that
\begin{eqnarray*}
((RN^q)^m-1)\eta_1&=&s_1^*+(RN^q)s_2^*+\dots+(RN^q)^{m-1}s_m^*{}
\nonumber\\&=&t\frac{RN^q}{N^{p_s+1}}(i_1+(RN^q)i_2+\dots+(RN^q)^{m-1}i_m),
\end{eqnarray*}
where $i_j=\frac{s_j^*}{t\frac{RN^q}{N^{p_s+1}}}\in B$ for $1\leq j\leq m$.

$``(\romannumeral2)\Rightarrow(\romannumeral1)"$ Suppose there exists a word $I=i_1i_2\dots i_m\in \Sigma_{B}^{*}$ with $I\neq 0^m$ such that $$((RN^q)^m-1)|t\frac{RN^q}{N^{p_s+1}}(i_1+(RN^q)i_2+\dots+(RN^q)^{m-1}i_m).$$
Let $\tau(\sigma)=t(\sigma_1+(RN^q)\sigma_2+\dots+(RN^q)^{n-1}\sigma_n)$ for $\sigma=\sigma_1\sigma_2\dots\sigma_n\in \Sigma_{B}^{*}\cup\Sigma_{B}^{\infty}$. We have $((RN^q)^m-1)|\frac{RN^q}{N^{p_s+1}}\tau(I)$. Denote $d=\frac{RN^q}{N^{p_s+1}}\frac{\tau(I)}{1-(RN^q)^m}:=\frac{RN^q}{N^{p_s+1}}d'$. It follows from gcd$((RN^q)^m-1,\frac{RN^q}{N^{p_s+1}})=1$ that $d'\in \mathbb{Z}$. Let
$$\Lambda'=\left\{\sum_{j=1}^{k}(RN^q)^{j-1}b_j:k\geq1 \text{ and all } b_j\in B\right\}.$$
Notice that $$d'=\tau(I)+d'(RN^q)^m=\tau(I)+\tau(I)(RN^q)^m+d'(RN^q)^{2m}=\tau(I^2)+d'(RN^q)^{2m}.$$
Proceeding inductively, it gets that
$$d'=\tau(I^j)+d'(RN^q)^{jm} $$
for any $j\geq1$. Since $ \tau(I^{\infty}|_n)=\tau(I^{j}|_n)$ for any $1\leq n\leq mj$, we have
\begin{align*}
\tau(I^{\infty})=&t(i_1+(RN^q)i_2+\dots+(RN^q)^{m-1}i_m+\cdots\\
&+(RN^q)^{(j-1)m}i_1+(RN^q)^{(j-1)m+1}i_2+\dots+(RN^q)^{jm-1}i_m+\cdots)\\
=&\tau(I^{\infty}|_{jm})+(RN^q)^{jm}\tau(I^{\infty})=\tau(I^{j})+(RN^q)^{jm}\tau(I^{\infty})
\end{align*}
for any $j\geq1$. Combining with $I^{\infty}\in \Sigma_{B}^{\infty}\setminus\Sigma_{B}^{*}0^{\infty}$, $d'=\tau(I^{\infty})\notin t\Lambda'$.  Therefore, $d\notin t\Lambda$.

For any $\lambda\in t\Lambda$, write $\lambda=t\frac{RN^q}{N^{p_s+1}}\lambda'$ with $\lambda'\in \Lambda'$. Next we show that $d$ is orthogonal to $\lambda$. For the $\lambda'\in \Lambda'$, there exists $J=j_1j_2\dots j_n\in\Sigma_{B}^{n}$ such that $\tau(J)=t\lambda'$. Let $s\geq1$ be the first index such that $(I^{\infty})_{s,s+1}\neq(J0^{\infty})_{s,s+1}$. Then
\begin{align*}
d-\lambda=&\frac{RN^q}{N^{p_s+1}}(d'-t\lambda')\\
=&t\frac{RN^q}{N^{p_s+1}}(RN^q)^{s-1}((I^{\infty})_{s,s+1}-(J0^{\infty})_{s,s+1}+RN^qK)
\end{align*}
for some integer $K$  and $(I^{\infty})_{s,s+1},(J0^{\infty})_{s,s+1}\in B$. We claim that $$N\nmid((I^{\infty})_{s,s+1}-(J0^{\infty})_{s,s+1}).$$ In fact, for any two distinct elements $b=b_1+N^{p_s-p_{s-1}}b_2+\cdots+N^{p_s-p_0}b_{s+1},b'=b'_1+N^{p_s-p_{s-1}}b'_2+\cdots+N^{p_s-p_0}b'_{s+1}\in B$, where $b_i,b'_i\in\{0,1,\dots,N-1\}$ for all $i\in \{1,2,\dots,s+1\}$, we have
$$b-b'=b_1-b'_1+N^{p_s-p_{s-1}}(b_2-b'_2+N^{p_{s-1}-p_{s-2}}(b_3-b'_3)+\dots+N^{p_{s-1}-p_0}(b_{s+1}-b'_{s+1})).$$ It is easy to see that $N\nmid (b-b')$ and the claim follows. It follows from gcd$(t,N)=1$ that $d-\lambda\in\mathcal{Z}(\hat{\mu}_{M,D})$. Then $t\Lambda$ is not a maximal orthogonal set and thus is not a spectrum of $\mu_{M,D}$.
\end{proof}

We have all ingredients for the proof of Theorem \ref{T1.2}.

\begin{proof}[{Proof of Theorem \ref{T1.2}}]
  The necessity follows immediately by Lemma \ref{T3.2} and  Lemma \ref{T3.5}. For the sufficiency, by Lemma \ref{T3.5} again, we need to show that there is not a word $I=i_1\dots i_m\in\Sigma_{B}^{*}$ with $I\neq 0^m$ such that
  \begin{eqnarray}\label{3.1}
  ((RN^q)^m-1)|t\frac{RN^q}{N^{p_s+1}}(i_1+(RN^q)i_2+\cdots+(RN^q)^{m-1}i_m).
  \end{eqnarray}
  By the sufficient condition, we only need to show that there is not a word $I=i_1\dots i_m\in\Sigma_{B}^{*}$ with $I\neq 0^m$ such that \eqref{3.1}  holds for all $m> l$, where $l=\#\{T(RN^q,tL)\cap\mathbb{Z}\}$. Suppose, on the contrary, that there exists $I=i_1i_2\dots i_n\in\Sigma_{B}^{n}\setminus\Sigma_{B}^{l}0^{n-l}$ for some $n>l$, such that \eqref{3.1} holds. Our next goal is to construct $I'=i_1'\dots i_r'\in\Sigma_{B}^{*}(I' \neq 0^r)$ with $r<l$ satisfying \eqref{3.1}, a contradiction.

Let $\tau(\sigma)=t(\sigma_1+(RN^q)\sigma_2+\cdots+(RN^q)^{k-1}\sigma_k)$ for $\sigma=\sigma_1\sigma_2\dots\sigma_k\in\Sigma_{B}^{*}$. For $\sigma=\sigma_1\sigma_2\dots\sigma_m$, denote $\Pi(\sigma)=\sigma_2\dots\sigma_m\sigma_1$. We claim that
\begin{equation}\label{3.2}
     \frac{\frac{RN^q}{N^{p_s+1}}\tau(\Pi^k(I))}{(RN^q)^n-1}\in T(RN^q,tL)\cap\mathbb{Z}.
\end{equation}
In fact, note that
$$\frac{\frac{RN^q}{N^{p_s+1}}\tau(\Pi^k(I))}{(RN^q)^n-1}=\sum_{i=1}^{\infty}(RN^q)^{-in}\frac{RN^q}{N^{p_s+1}}\tau(\Pi^k(I)), $$
and $\tau(\Pi^k(I))=t\sum_{k=0}^{n-1}(RN^q)^ki'_{k+1}$, where $\{i'_k\}_{k=1}^{n}$ is a rearrangement of $\{i_k\}_{k=1}^{n}$,
we have
$$\frac{\frac{RN^q}{N^{p_s+1}}\tau(\Pi^k(I))}{(RN^q)^n-1}=\sum_{i=1}^{\infty}\sum_{k=0}^{n-1}(RN^q)^{-in+k}t\frac{RN^q}{N^{p_s+1}}i'_{k+1}\in T(RN^q,tL).$$
Moreover,
\begin{align*}
\frac{\frac{RN^q}{N^{p_s+1}}\tau(\Pi(I))}{(RN^q)^n-1}=&\frac{\frac{RN^q}{N^{p_s+1}}{t(i_2+(RN^q)i_3+\cdots+(RN^q)^{n-2}i_n+(RN^q)^{n-1}i_1)}}{(RN^q)^n-1}\\
=&\frac{\frac{RN^q}{N^{p_s+1}}{t(i_1+(RN^q)i_2+\cdots+(RN^q)^{n-1}i_n+(RN^q)^{n}i_1-i_1)}}{RN^q((RN^q)^n-1)}\\
=&\frac{\frac{RN^q}{N^{p_s+1}}{(\tau(I)+t((RN^q)^n-1)i_1)}}{RN^q((RN^q)^n-1)}.
\end{align*}
Hence
 $${RN^q}\frac{RN^q}{N^{p_s+1}}\tau(\Pi(I))=\frac{RN^q}{N^{p_s+1}}(\tau(I)+t((RN^q)^n-1)i_1).$$
It is known that $((RN^q)^n-1)|\frac{RN^q}{N^{p_s+1}}\tau(I)$. It follows from gcd$(RN^q,(RN^q)^n-1)=1$ that $((RN^q)^n-1)|\frac{RN^q}{N^{p_s+1}}\tau(\Pi(I)).$ By induction, we have $\frac{\frac{RN^q}{N^{p_s+1}}\tau(\Pi^k(I))}{(RN^q)^n-1} \in \mathbb{Z}$ and the \eqref{3.2} is proved.

Since $I=\Pi^n(I)$, there exists the smallest positive integer $r$ such that $I=\Pi^r(I)$. Then $I,\Pi(I),\cdots,\Pi^{r-1}(I)$ are pairwise different. By (\ref{3.2}) and the fact that $\tau$ is injective, we have $r\leq l<n$. Moreover, $\{k\in \mathbb{Z}:\Pi^k(I)=I\}=r\mathbb{Z}$ is a subgroup of $\mathbb{Z}$, where $\Pi^{-1}(\sigma)=\sigma_n\sigma_1\dots\sigma_{n-1}$ for $\sigma=\sigma_1\sigma_2\dots\sigma_n$. Then $r|n$. Write $n=kr$ and thus $I=(I|_r)^k$. Note that
$$\tau(I)=\tau((I|_r)^k)=\tau(I|_r)\frac{1-(RN^q)^{kr}}{1-(RN^q)^r}=\tau(I|_r)\frac{1-(RN^q)^{n}}{1-(RN^q)^r},$$
we have $$\frac{\frac{RN^q}{N^{p_s+1}}\tau(I)}{(RN^q)^n-1}=\frac{\frac{RN^q}{N^{p_s+1}}\tau(I|_r)}{(RN^q)^r-1},$$ which contradicts the sufficient condition and thus by Lemma \ref{T3.5}, the proof of Theorem \ref{T1.2} is complete.
\end{proof}

\section{Proof of Theorem \ref{T1.3}}\label{4}
\maketitle
In this section, we prove Theorem \ref{T1.3}.

\begin{proof}[Proof of the necessity of Theorem \ref{T1.3}]
  Suppose that there exist $\Lambda$ and $t\Lambda$ such that both of them are spectra of $\mu_{M,D}$. Without loss of generality, we can assume that $0\in\Lambda$. Then $\Lambda,t\Lambda\subseteq\mathcal{Z}(\hat{\mu}_{M,D})\cup\{0\}$, which implies that $t$ is a rational number, say, $t=\frac{t_1}{t_2}$ with $t_1,t_2\in \mathbb{Z}\setminus\{0\}$ and gcd$(t_1,t_2)=1$. Next, we prove the necessity by contradiction.

Suppose gcd$(t_1,N)=:d_1>1$. By Lemma \ref{T3.1}, $t_1\mathbb{Z}$ does not contain a complete set of the set $\mathbb{Z} \setminus  N\mathbb{Z}$. Let $\Lambda_0=\frac{RN^q}{N^{p_s+1}}(\bigcup_{j=0}^{s}\{N^{p_s-p_j}l:N\nmid l\})$ with $p_0=0$. Then by the same idea as in Lemma \ref{T3.2}, $c^{-1}(\Lambda_0\cap t\Lambda)\cup\{0\}$ contains a complete set of the set $\mathbb{Z}\setminus N\mathbb{Z}$, where $c=\frac{RN^q}{N^{p_s+1}}$. On the other hand,
$$c^{-1}(\Lambda_0\cap t\Lambda)\cup\{0\}\subseteq c^{-1}\frac{t_1}{t_2}\Lambda\cup\{0\}\subseteq t_1\mathbb{Z},$$
where the last inclusion relation follows from the facts that $\Lambda\subseteq c\mathbb{Z}$ and $t\Lambda\subseteq c\mathbb{Z}$. This contradicts the fact that $t_1\mathbb{Z}$ does not contain a complete set of the set $\mathbb{Z}\setminus N\mathbb{Z}$.

Suppose gcd$(t_2,N):=d_2>1$. By the same idea again as in Lemma \ref{T3.2}, $c^{-1}(\Lambda_0\cap\Lambda)\cup\{0\}$ contains a complete set of the set $\mathbb{Z}\setminus N\mathbb{Z}$. Then there exists $\lambda_0\in \Lambda_0\cap\Lambda$ such that gcd$(d_2,\frac{\lambda_0}{c})=1$. Here, we can choose $\lambda_0\in c(d_2-1+N\mathbb{Z})\cap(\Lambda_0\cap\Lambda).$ It follows from gcd$(t_1,d_2)=1$ that  $t\frac{\lambda_0}{c}=\frac{t_1\frac{\lambda_0}{c}}{\frac{t_2}{d_2}d_2}\notin\mathbb{Z}$. We have $t\lambda_0\notin c\mathbb{Z}$ and thus $t\lambda_0\notin \mathcal{Z}(\hat\mu_{M,D})$. This contradicts the fact $t\Lambda$ is a spectrum of $\mu_{M,D}$.
\end{proof}

The following theorem gives a sufficient condition for an orthogonal set of $\mu_{M,D}$ to be a spectrum.

\begin{thm}\cite[Theorem 2.7,Theorem 2.8]{S00}\label{T4.1}
Given an infinite compatible tower whose compatible pairs $\{B_j,L_j\}$ and expanding matrices $\{R_j\}$ are all chosen from a finite set of compatible pairs and expanding matrices, the infinite convolution product measure
$$\mu=\mu_0*(\mu_1\circ R_1)*\cdots*(\mu_k\circ(R_k\cdots R_1))*\cdots$$
exists as a weak limit and is a compactly supported probability measure, whose Fourier transform is given by the infinite product
$$\hat{\mu}(t)=\prod_{k=0}^{\infty}\hat{\mu}_k((R_k^*)^{-1}\cdots(R_1^*)^{-1}t).$$
Moreover, the set of functions $\{e^{2\pi{i}{ \langle x,\lambda \rangle }}\}_{\lambda\in\Lambda}$ for
$$\Lambda=L_0+R_1^*L_1+\cdots+R_1^*\cdots R_k^*L_k+\cdots$$
is an orthogonal set.
Suppose that the zero set $\mathcal{Z}_k$ of the trigonometric polynomial $\hat{\mu}_k$ is separated from the set
$$(R_k^*)^{-1}\cdots(R_1^*)^{-1}L_0+(R_k^*)^{-1}\cdots(R_2^*)^{-1}L_1+\cdots+(R_k^*)^{-1}L_{k-1}$$
by a distance $\delta>0$, uniformly in k, for all large k. Then $\Lambda$ is a spectrum for $\mu$.
\end{thm}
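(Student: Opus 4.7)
The plan is to decompose the statement into four assertions—existence of the weak limit $\mu$, the Fourier product formula, orthogonality of $E(\Lambda)$, and completeness under the separation hypothesis—and to treat them in that order.

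For existence and the product formula for $\hat{\mu}$, set $\nu_n=\mu_0\ast(\mu_1\circ R_1)\ast\cdots\ast(\mu_n\circ(R_n\cdots R_1))$. Because the $R_j$'s are drawn from a finite family of expanding matrices, there is a uniform $\rho>1$ with $\|(R_j\cdots R_1)^{-1}\|\leq C\rho^{-j}$; consequently each factor $\mu_k\circ(R_k\cdots R_1)$ is supported in a ball of radius $O(\rho^{-k})$, so the supports of the $\nu_n$ lie in a common compact set and the sequence is tight. The Fourier transform of $\nu_n$ is the finite product $\prod_{k=0}^n \hat{\mu}_k((R_k^\ast)^{-1}\cdots(R_1^\ast)^{-1}\xi)$, and since each factor approaches $1$ geometrically on compact $\xi$-sets, the partial products converge uniformly on compacta to a continuous function. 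L\'evy's continuity theorem then identifies the weak limit $\mu$ and furnishes the claimed infinite product formula.

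For orthogonality of $E(\Lambda)$, take distinct $\lambda=\sum_{k\geq 0}R_1^\ast\cdots R_k^\ast\ell_k$ and $\lambda'=\sum_{k\geq 0}R_1^\ast\cdots R_k^\ast\ell_k'$ in $\Lambda$, and let $j$ be the smallest index with $\ell_j\neq\ell_j'$. In the product formula for $\hat{\mu}(\lambda-\lambda')$ the $j$th factor evaluates $\hat{\mu}_j$ at an argument whose class modulo the integer lattice equals $\ell_j-\ell_j'$ (the higher-index terms contribute integer vectors after being multiplied by the inverses of integer matrices), and the lattice-periodicity of the trigonometric polynomial $\hat{\mu}_j$, combined with the compatibility of $(B_j,L_j)$—which gives $\hat{\mu}_j(\ell-\ell')=0$ for distinct $\ell,\ell'\in L_j$—forces this factor, and hence the whole product, to vanish.

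The main obstacle is completeness, and here I would follow the strategy of Jorgensen and Pedersen. Set $Q_\Lambda(\xi):=\sum_{\lambda\in\Lambda}|\hat{\mu}(\xi+\lambda)|^2$; orthogonality already gives $Q_\Lambda\leq 1$, and $\Lambda$ is a spectrum if and only if $Q_\Lambda\equiv 1$. Introduce the truncations $\Lambda_k=L_0+R_1^\ast L_1+\cdots+R_1^\ast\cdots R_k^\ast L_k$ and the partial convolution $\mu_{[0,k]}$. At the finite level, Parseval applied to the finite Hadamard basis $E(\Lambda_k)$ in $L^2(\mu_{[0,k]})$ yields $\sum_{\lambda\in\Lambda_k}|\hat{\mu}_{[0,k]}(\xi+\lambda)|^2\equiv 1$. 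Writing $\hat{\mu}(\xi+\lambda)=\hat{\mu}_{[0,k]}(\xi+\lambda)\,T_k(\xi+\lambda)$ with the tail product $T_k(\xi+\lambda)=\prod_{j>k}\hat{\mu}_j((R_j^\ast)^{-1}\cdots(R_1^\ast)^{-1}(\xi+\lambda))$, the task reduces to showing $|T_k(\xi+\lambda)|\to 1$ in the right uniform sense over $\lambda\in\Lambda_k$ and $\xi$ in a compact set. The separation hypothesis is critical here in two complementary roles: for the leading tail factor, it forces the argument to stay at distance $\geq\delta$ (up to an $O(\rho^{-k})$ perturbation from the $\xi$-part) from $\mathcal{Z}_{k+1}$, yielding a uniform lower bound $|\hat{\mu}_{k+1}|\geq c_\delta$; for deeper factors $j\gg k$ the arguments contract geometrically, so the Taylor estimate $|\hat{\mu}_j(t)|\geq 1-C\|t\|^2$ near $0$ renders $\log|T_k|$ summable and drives $T_k\to 1$. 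Splicing these two regimes together uniformly—so that Fatou or dominated convergence can be applied to pass to the limit in the finite-level Parseval identity and deliver $Q_\Lambda\geq 1$—is the most delicate technical step, and where I expect the bulk of the work in making Strichartz's framework rigorous.
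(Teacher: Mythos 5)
You should know at the outset that the paper does not prove this statement at all: Theorem 4.1 is quoted verbatim from Strichartz \cite[Theorems 2.7 and 2.8]{S00} and is used as a black box in Section 4. So your attempt can only be compared with Strichartz's original argument, and your outline does reconstruct its architecture faithfully: tightness plus locally uniform convergence of the partial Fourier products (L\'evy continuity) for existence and the product formula; the first-differing-digit computation for orthogonality; and completeness via the Jorgensen--Pedersen function $Q_\Lambda(\xi)=\sum_{\lambda\in\Lambda}|\hat{\mu}(\xi+\lambda)|^2$, the exact Parseval identity at each finite level of the tower, and tail-product estimates driven by the separation hypothesis. One slip in the orthogonality step: after cancellation the higher digits enter multiplied by the \emph{forward} products $R_{j+1}^*\cdots R_k^*$ of the integer matrices (the inverses cancel, they are not applied), so these terms are integer vectors; the reduction then uses that $B_j\subset\mathbb{Z}^n$ makes the trigonometric polynomial $\hat{\mu}_j$ periodic modulo $\mathbb{Z}^n$, after which compatibility gives $\hat{\mu}_j(\ell_j-\ell_j')=0$ as you say.

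Two substantive remarks on the points you flagged. (i) Your stated goal, ``show $|T_k(\xi+\lambda)|\to 1$ in the right uniform sense over $\lambda\in\Lambda_k$,'' asks for too much and is in fact false: for $\lambda$ whose rescaled image sits at distance comparable to $\delta$ from $\mathcal{Z}_{k+1}$, the leading tail factor stays near $c_\delta<1$ for every $k$. What your two regimes actually deliver is a uniform positive lower bound $|T_k|\geq c_0>0$ for $\xi$ in a fixed compact set, and that already suffices: it gives the domination $|\hat{\mu}_{[0,k]}(\xi+\lambda)|^2\leq c_0^{-2}|\hat{\mu}(\xi+\lambda)|^2$, which is summable over $\Lambda$ by Bessel's inequality, and since $T_k(\xi+\lambda)\to 1$ pointwise for each \emph{fixed} $\lambda$, dominated convergence applied to $\sum_{\lambda\in\Lambda_k}|\hat{\mu}_{[0,k]}(\xi+\lambda)|^2=1$ yields $Q_\Lambda\equiv 1$. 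Note also that between your ``leading tail factor'' and the regime $j\gg k$ there are boundedly many middle factors whose arguments are bounded but not yet small; each of these needs the separation hypothesis at its own index $j$, which is available because $\Lambda_k\subset\Lambda_{j-1}$ (using $0\in L_i$), together with the geometric fact --- worth stating explicitly --- that the rescaled truncated spectra $(R_k^*)^{-1}\cdots(R_1^*)^{-1}\Lambda_{k-1}$ lie in a single fixed compact set for all $k$ (a geometric series estimate). (ii) Your opening bound $\|(R_j\cdots R_1)^{-1}\|\leq C\rho^{-j}$ is not automatic for an arbitrary finite family of expanding matrices: each inverse has spectral radius below $1$, but the joint spectral radius of the inverses can exceed $1$ (a pair of upper- and lower-triangular expanding integer matrices with one large off-diagonal entry produces alternating products whose inverses blow up), so uniform expansivity of the products must be assumed or arranged; it is implicit in Strichartz's setting and vacuous in the present paper's application, where every $R_j$ is the scalar $RN^q>1$.
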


To show the sufficiency of Theorem \ref{T1.3}, we first consider the case that $t$ is an integer with gcd$(t,N)=1.$ In this case, the scaling set $t\Lambda$ is not guaranteed also a spectrum, where $\Lambda$ is defined as in \eqref{1.3}. We have to need more choices. The key idea is that we can give a new class of spectra by the way of using infinite word in $\{-1,1\}^{\infty}$ acting on $\Lambda.$ This is first observed by Fu et al for the Bernoulli convolution in \cite{FHW18}. This provides more candidate of spectra. Precisely, we have the following proposition.

\begin{prop}
  For every $\omega\in \{-1,1\}^{\infty}$, the set
 $$ \Lambda_{\omega}=\left\{\sum_{j=1}^{k}l_j\omega_j(RN^q)^{j-1}:k\geq1 \text{ and all } l_j\in L \right\},$$
 where $L=\frac{RN^q}{N^{p_s+1}}B$ and $B=\{0,1,\dots,N-1\}\oplus N^{p_s-p_{s-1}}\{0,1,\dots,N-1\}\oplus\cdots\oplus N^{p_s-p_0}\{0,1,\dots,N-1\}$ with $p_0=0$, is a spectrum of $\mu_{M,D}$.
\end{prop}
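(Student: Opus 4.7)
The plan is to realize $\Lambda_\omega$ as the spectrum produced by Strichartz's criterion (Theorem~\ref{T4.1}), applied to an infinite compatible tower in which the sign choices $\omega_j$ are encoded as the dual set at each level.

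First I would observe that both $(M^{-1}D,L)$ and $(M^{-1}D,-L)$ are compatible pairs. The first is Lemma~\ref{T2.6}, and the second is immediate because the associated Hadamard matrix for $-L$ is the entrywise complex conjugate of the one for $L$, hence still unitary. With constant expanding factor $R_j\equiv M=RN^q$ and the compatible pair $(B_j,L_j)=(M^{-1}D,\,\omega_{j+1}L)$ at level $j\ge 0$, one obtains an infinite compatible tower drawn from only two compatible pairs, so the finiteness hypothesis of Theorem~\ref{T4.1} is satisfied. The resulting infinite convolution product equals $\mu_{M,D}$ (the sign choices do not enter the factors $\delta_{M^{-1}D}$), and the candidate spectrum produced by the theorem is exactly
\[
L_0+M L_1+M^2 L_2+\cdots=\omega_1 L+M\omega_2 L+M^2\omega_3 L+\cdots=\Lambda_\omega.
\]

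The principal obstacle is verifying the uniform separation hypothesis. Set
\[
E_k:=M^{-k}\omega_1 L+M^{-(k-1)}\omega_2 L+\cdots+M^{-1}\omega_k L,
\]
and note that $\mathcal{Z}_k=\mathcal{Z}(\hat\delta_{M^{-1}D})=M\mathcal{Z}(m_D)$. Using that $B$ is a direct sum of $s+1$ base-$N$ digit blocks whose exponents $\{0,\,p_s-p_{s-1},\ldots,p_s\}$ are distinct and lie in $\{0,1,\ldots,p_s\}$, I would bound $\max B\le N^{p_s+1}-1$ and hence $C:=\max L\le M-\tfrac{M}{N^{p_s+1}}$; summing the geometric series then gives $E_k\subset\bigl[-\tfrac{C}{M-1},\tfrac{C}{M-1}\bigr]$, uniformly in $k$ and $\omega$. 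The paper's first lemma identifies the smallest positive element of $\mathcal{Z}_k$ as $\tfrac{M}{N^{p_s+1}}$, so a direct computation yields
\[
\mathrm{dist}(\mathcal{Z}_k,E_k)\;\ge\;\frac{M}{N^{p_s+1}}-\frac{C}{M-1}\;\ge\;\frac{M\,(M-N^{p_s+1})}{N^{p_s+1}(M-1)}\;>\;0,
\]
where positivity of $M-N^{p_s+1}$ follows from $R\ge 2$ together with $p_s<q$ (which forces $M=RN^q\ge 2N^{p_s+1}$). Theorem~\ref{T4.1} then delivers that $\Lambda_\omega$ is a spectrum of $\mu_{M,D}$.

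Orthogonality of $\Lambda_\omega$ is automatic from the tower construction, but for completeness one can verify it directly: at the first index $m$ of disagreement, $\lambda-\lambda'=M^{m-1}\bigl(\omega_m(l_m-l_m')+MK\bigr)$ for some $K\in\mathbb{Z}$, and the Lemma~\ref{T2.6} argument puts $l_m-l_m'$ into $M\mathcal{Z}(m_D)$; both the sign $\omega_m=\pm 1$ and the integer correction $MK$ preserve the arithmetic form of this zero set, giving $\lambda-\lambda'\in M^m\mathcal{Z}(m_D)\subset\mathcal{Z}(\hat\mu_{M,D})$. Thus the only real work is the separation estimate, after which everything else is bookkeeping around Strichartz's theorem.
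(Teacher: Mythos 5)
Your proof is correct and follows essentially the same route as the paper: both apply Strichartz's criterion (Theorem~\ref{T4.1}) to the constant tower with compatible pairs $((RN^q)^{-1}D,\omega_k L)$ and verify the uniform separation hypothesis by bounding the geometric series of $L$-elements (via $\max B\le N^{p_s+1}-1$, so $\max|T_n|<\frac{RN^q}{N^{p_s+1}}$) against the smallest element $\frac{RN^q}{N^{p_s+1}}$ of the zero set. Your explicit distance estimate and the conjugate-Hadamard justification of compatibility for $-L$ are just slightly more detailed versions of steps the paper handles by citation (Lemma~\ref{T3.3}) and a displayed inequality.
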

\begin{proof}
  Fix an $\omega\in\{-1,1\}^{\infty}$ and $n\geq 1$, write
  $$T_n=\left\{(RN^q)^{-n}\sum_{i=1}^{n}l_i\omega_i(RN^q)^{i-1}:l_i\in L \text{ for } 1\leq i \leq n \right\}.$$
Notice that
 \begin{align*}
&(RN^q)^{-n}\sum_{i=1}^{n}\frac{RN^q}{N^{p_s+1}}(N-1+N^{p_s-p_{s-1}}(N-1)+\cdots+N^{p_s-p_0}(N-1))(RN^q)^{i-1}\\
=&(RN^q)^{-n}\frac{RN^q}{N^{p_s+1}}(N-1)(1+N^{p_s-p_{s-1}}+\cdots+N^{p_s-p_0})\frac{(RN^q)^n-1}{RN^q-1}\\
<&\frac{RN^q}{N^{p_s+1}}\frac{(N-1)(1+N^{p_s-p_{s-1}}+\cdots+N^{p_s-p_0})}{RN^q-1}\\
<&\frac{RN^q}{N^{p_s+1}}.
\end{align*}
Therefore, the set
$$\mathcal{Z}(\hat{\mu}_{n})=\frac{RN^q}{N^{p_s+1}}\bigcup_{k=0}^{n-1}(RN^q)^k(\bigcup_{j=0}^{s}\{N^{p_s-p_j}l:N\nmid l\})$$
is uniformly disjoint form the sets $T_n,n\geq1$. On the other hand, by Lemma \ref{T3.3}, $((RN^q)^{-1}D,\omega_kL)$ is compatible for each $\omega_k$. Then we have $\Lambda_\omega$ is spectrum of $\mu_{M,D}$ by Theorem \ref{T4.1}.
\end{proof}

The following result may be proved in much the same way as Theorem \ref{T1.2}.

\begin{thm}\label{T4.3}
  Suppose $\omega=(\omega_1\omega_2\dots\omega_r)^{\infty}\in\{-1,1\}^{\infty}$ be a periodic word with periodicity $r$. Let t be an integer number with gcd$(t,N)=1$. Then $t\Lambda_\omega$ is a spectrum of $\mu_{M,D}$ if and only if there is no $I=i_1i_2\dots i_n(I\neq 0^n)$ with all $i_j\in \Sigma_{\omega,r}$ such that
  $$((RN^q)^{nr}-1)|t(i_1+(RN^q)^ri_2+\cdots+(RN^q)^{(n-1)r}i_n)$$
for all $n\leq\#\{T((RN^q)^r,t\Sigma_{\omega,r})\cap\mathbb{Z}\},$ where $\Sigma_{\omega,r}=\{\sum_{k=1}^{r}l_k\omega_k(RN^q)^{k-1}:  l_k\in L \text{ for } 1\leq k\leq r\}$ and
$$T((RN^q)^r,t\Sigma_{\omega,r})=\left\{\sum_{j=1}^{\infty}\frac{d_j}{(RN^q)^{rj}}:d_j\in{t\Sigma_{\omega,r}} \text{ for } j\geq1\right\}.$$
\end{thm}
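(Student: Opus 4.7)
The plan is to imitate the proof of Theorem \ref{T1.2} by blocking the IFS into windows of length $r$. Since $\omega$ has period $r$, regrouping the defining sum of $\Lambda_\omega$ into blocks of $r$ consecutive indices gives
$$\Lambda_\omega = \left\{\sum_{m=1}^{k}((RN^q)^r)^{m-1}\sigma_m : k \geq 1,\ \sigma_m \in \Sigma_{\omega,r}\right\}.$$
This has exactly the same form as the spectrum $\Lambda$ in Theorem \ref{T1.1}, with the contraction $M=RN^q$ replaced by $M^r$ and the digit set $L$ replaced by $\Sigma_{\omega,r}$. Correspondingly, the measure $\mu_{M,D}$ can be rewritten as $\mu_{M,D}=\delta_{M^{-r}D_r}\ast(\mu_{M,D}\circ M^r)$ where $D_r:=D+MD+\cdots+M^{r-1}D$, so $\mu_{M,D}=\mu_{M^r,D_r}$.

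First I would verify that $(M^{-r}D_r,t\Sigma_{\omega,r})$ is a compatible pair. By Lemma \ref{T3.3}, the one-step pair $(M^{-1}D,tL)$ is compatible when $\gcd(t,N)=1$. Since replacing an entry $c$ of the second slot by $-c$ only conjugates the corresponding column of the Hadamard matrix and hence preserves unitarity, $(M^{-1}D,t\omega_k L)$ is also compatible for each $k=1,\dots,r$. Iterating these compatible pairs through the $r$-fold convolution---the orthogonality being checked block-by-block exactly as in Lemma \ref{T2.6}---yields compatibility of $(M^{-r}D_r,t\Sigma_{\omega,r})$.

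Next, I would apply Theorem \ref{T3.4} to $\mu_{M^r,D_r}$ with the candidate spectrum $\Lambda(M^r,t\Sigma_{\omega,r})=t\Lambda_\omega$: the pair is not spectral iff there exist nonzero integers $\eta_j$ and elements $s_j^*\in t\Sigma_{\omega,r}$ with $\eta_{j+1}=M^{-r}(\eta_j+s_j^*)$ cyclically. The telescoping calculation in the proof of Lemma \ref{T3.5} (with $RN^q$ replaced by $M^r$ throughout) then translates this cycle condition into the divisibility condition
$$((RN^q)^{nr}-1)\ \bigm|\ t\bigl(i_1+(RN^q)^r i_2+\cdots+(RN^q)^{(n-1)r}i_n\bigr)$$
for some nonzero word $I=i_1\cdots i_n$ with entries in $\Sigma_{\omega,r}$. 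This establishes the equivalence without the cardinality bound.

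Finally, to reduce the word length to $n\le\#\{T((RN^q)^r,t\Sigma_{\omega,r})\cap\mathbb{Z}\}$, I would apply verbatim the cyclic-shift argument used at the end of the proof of Theorem \ref{T1.2}: the iterated shifts $\Pi^k(I)$ give rise to elements $\tfrac{\tau(\Pi^k(I))}{(M^r)^n-1}$ in $T((RN^q)^r,t\Sigma_{\omega,r})\cap\mathbb{Z}$, and if there are fewer than $n$ such distinct values, the minimal $\Pi$-period $r'<n$ forces $I=(I|_{r'})^{n/r'}$, producing a strictly shorter witness $I|_{r'}$ and contradicting minimality. The main obstacle I anticipate is the first step: carefully unpacking the interleaving of the signs $\omega_k$ with the iterated digits of $D_r$ to confirm the Hadamard triple property for the blocked pair. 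Once this verification is in place, the remainder of the proof runs as a mechanical adaptation of the arguments in Section \ref{3}.
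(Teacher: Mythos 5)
Your proposal is correct and follows essentially the approach the paper intends: the paper offers no separate proof of Theorem \ref{T4.3}, stating only that it ``may be proved in much the same way as Theorem \ref{T1.2}'', and your blocking of the measure as $\mu_{M^r,D_r}$ with candidate spectrum $\Lambda(M^r,t\Sigma_{\omega,r})=t\Lambda_\omega$, followed by Theorem \ref{T3.4}, the telescoping computation of Lemma \ref{T3.5}, and the cyclic-shift reduction from the proof of Theorem \ref{T1.2}, is precisely that adaptation. One small correction: the compatibility of $(M^{-1}D,t\omega_k L)$ should be justified not by conjugating a single column of the Hadamard matrix (conjugating one column of a unitary matrix does not preserve unitarity in general), but by noting that $\omega_k\in\{-1,1\}$ flips the sign of the \emph{whole} set $tL$, so the entire matrix is replaced by its complex conjugate $\overline{H}$, which is unitary whenever $H$ is.
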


The following theorem is crucial to find the spectral eigenvalues of the second type of $\mu_{M,D}$.

\begin{thm}\label{T4.4}
  If $t\in \mathbb{Z}$ with gcd$(t,N)=1$, then there exists an infinite word $\omega=\omega_1\omega_2\ldots\in \{-1,1\}^{\infty}$ such that $t\Lambda_{\omega}$ is a spectrum of $\mu_{M,D}$.
\end{thm}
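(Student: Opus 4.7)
The plan is to construct an infinite word $\omega \in \{-1,1\}^\infty$, periodic with a suitably chosen period $r$, so that $t\Lambda_\omega$ satisfies the criterion of Theorem \ref{T4.3} and is therefore a spectrum of $\mu_{M,D}$.

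The first step is to dispatch orthogonality of $t\Lambda_\omega$ for every $\omega \in \{-1,1\}^\infty$. Since $\gcd(t,N)=1$, Lemma \ref{T3.3} gives that $((RN^q)^{-1}D, tL)$ is a compatible pair. A column sign flip of the associated Hadamard matrix preserves unitarity, so $((RN^q)^{-1}D, t\omega_k L)$ remains compatible for every $\omega_k \in \{-1,1\}$. Consequently, the hypotheses of Theorem \ref{T4.1} are met and $t\Lambda_\omega$ is orthogonal in $L^2(\mu_{M,D})$ for every choice of $\omega$; only completeness remains to be established.

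Next, I would restrict attention to periodic $\omega = (\omega_1 \cdots \omega_r)^{\infty}$ so that the explicit criterion of Theorem \ref{T4.3} applies. The task then reduces to choosing the period $r$ and the block $(\omega_1,\ldots,\omega_r) \in \{-1,1\}^r$ in such a way that no nonzero word $I = i_1\cdots i_n \in \Sigma_{\omega,r}^n$ satisfies
$$
((RN^q)^{nr}-1)\,\bigm|\,t\bigl(i_1 + (RN^q)^r i_2 + \cdots + (RN^q)^{(n-1)r} i_n\bigr)
$$
for any $n \leq \#\bigl(T((RN^q)^r, t\Sigma_{\omega,r})\cap\mathbb{Z}\bigr)$. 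Translating this divisibility into dynamical language as in Theorem \ref{T3.4}, each obstruction corresponds to a periodic orbit of the map $\eta \mapsto (RN^q)^{-r}(\eta + s^{*})$ on the finite set $T((RN^q)^r, t\Sigma_{\omega,r})\cap\mathbb{Z}$, with digits $s^{*} \in t\Sigma_{\omega,r}$. The leverage from the signs $\omega_k$ is that altering one $\omega_k$ changes the digit alphabet $\Sigma_{\omega,r}$ while the integer attractor stays uniformly bounded in $r$, so the graph of possible transitions changes without creating unbounded new structure.

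Finally, I would produce a good $\omega$ by a greedy procedure: start from the all-ones block of period $r$, enumerate the finitely many surviving periodic orbits, and successively flip signs $\omega_k$ to break them, verifying that no flip reintroduces a previously destroyed orbit. Termination is ensured by the finiteness of $T((RN^q)^r, t\Sigma_{\omega,r})\cap\mathbb{Z}$ together with monotonic bookkeeping of the surviving cycle set. Once such a periodic block is obtained, Theorem \ref{T4.3} yields immediately that $t\Lambda_\omega$ is a spectrum of $\mu_{M,D}$. The main obstacle I anticipate is the combinatorial control in the greedy step: showing that, for every integer $t$ coprime to $N$, one can actually destroy every obstructing orbit by sign flips without spawning a new one. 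Handling this likely requires a careful tracking of how the digits in $t\Sigma_{\omega,r}$ interact with the bounded integer set $T((RN^q)^r, t\Sigma_{\omega,r})\cap\mathbb{Z}$, possibly supplemented by a pigeonhole argument over the $2^r$ candidate blocks for large enough $r$, showing the fraction producing any cycle is strictly less than one.
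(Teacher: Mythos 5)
Your overall frame --- pass to periodic words and invoke Theorem \ref{T4.3} --- is exactly the paper's, and your preliminary observations are sound: orthogonality of $t\Lambda_\omega$ for every $\omega$ via Lemma \ref{T3.3} and Theorem \ref{T4.1}, and the fact that $\#\bigl(T((RN^q)^r,t\Sigma_{\omega,r})\cap\mathbb{Z}\bigr)$ stays bounded independently of $r$. But the entire content of the theorem is the actual production of a block $(\omega_1,\dots,\omega_r)$ admitting no obstructing word, and there your argument is not a proof. The greedy scheme rests on claims you never substantiate: that some sign flip destroys a given obstructing orbit, that flips can be ordered so as not to recreate old orbits or spawn new ones, and that the process terminates because of ``monotonic bookkeeping.'' None of this is available as stated: flipping a single $\omega_k$ changes the whole digit alphabet $t\Sigma_{\omega,r}$, hence simultaneously changes every value $t(i_1+(RN^q)^r i_2+\cdots+(RN^q)^{(n-1)r}i_n)$ and the attractor $T((RN^q)^r,t\Sigma_{\omega,r})$ itself, so there is no quantity that decreases step by step and no basis for the bookkeeping. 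The pigeonhole fallback (``the fraction of blocks carrying a cycle is $<1$ for large $r$'') is likewise only a hope; proving it would require precisely the quantitative estimate that is missing. You correctly reduce the theorem to Theorem \ref{T4.3} and then stop where the real work begins --- as you yourself acknowledge in the last sentences.

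The paper's proof supplies that missing mechanism, and it is not an orbit-destruction argument at all but a one-line magnitude argument attached to one explicit block: take the period $r$ sufficiently large (depending on $t$) and let the last several entries of the repetend equal $-1$. Then any sum $S=i_1+(RN^q)^ri_2+\cdots+(RN^q)^{(n-1)r}i_n$ with digits $i_j\in\Sigma_{\omega,r}$, not all zero, is automatically nonzero (a signed base-$(RN^q)$ expansion whose digits have modulus $<RN^q$ vanishes only if all digits vanish), while the sign pattern is claimed to force $|tS|<(RN^q)^{nr}-1$; together these make the divisibility in Theorem \ref{T4.3} impossible outright, with no cycles, no enumeration, no induction on flips. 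That size-versus-divisor idea is what your write-up never arrives at. For completeness, note that even the paper's sketch is terse at this point: choosing all digits maximal on the negative positions makes $|tS|$ of order $\frac{t\max L}{RN^q-1}(RN^q)^{nr}$, which is not below $(RN^q)^{nr}-1$ once $t\geq 2$, so a fully rigorous version must combine the size bound with the cutoff $n\leq\#\bigl(T((RN^q)^r,t\Sigma_{\omega,r})\cap\mathbb{Z}\bigr)$ or with a gcd refinement of the divisibility. But that is a defect of exposition in the paper; it does not rescue your greedy construction, whose correctness and termination remain unproven.
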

\begin{proof}
  There exists $\omega=(\omega_1\omega_2\ldots\omega_r)^{\infty}$ with sufficiently large periodicity $r$ such that for any $I=i_1i_2\ldots i_n(I\neq 0^n)$ with all $i_j\in \Sigma_{\omega,r}$, one has $((RN^q)^{nr}-1)> |t(i_1+(RN^q)^ri_2+\cdots+(RN^q)^{(n-1)r}i_n)|$ (to achieve it, we can require that the last few items of the repetend take values -1) and thus
  $$((RN^q)^{nr}-1)\nmid |t(i_1+(RN^q)^ri_2+\cdots+(RN^q)^{(n-1)r}i_n)|.$$
By Theorem \ref{T4.3}, $t\Lambda_{\omega}$ is a spectrum of $\mu_{M,D}.$
\end{proof}

To this end, we show the sufficiency of Theorem \ref{T1.3}.

\begin{proof}[Proof of the sufficiency of Theorem \ref{T1.3}]
  By Theorem \ref{T4.4}, for any two distinct integers $t_1,t_2$ with gcd$(t_i,N)=1$ for $i=1,2$, there exist $\omega_1,\omega_2$ such that $t_1\Lambda_{\omega_1}$ and $t_2\Lambda_{\omega_2}$ are spectra of $\mu_{M,D}$. Moreover, we can require that $\omega_1=\omega_2$. In fact, similarly as in the proof of Theorem \ref{T4.4}, there exists a common periodic word $\omega$ with sufficiently large periodicity $r$. One has
  $$|(RN^q)^{nr}-1|>|t_i(i_1+(RN^q)^ri_2+\cdots+(RN^q)^{(n-1)r}i_n)|$$
  for $i=1,2$. Then
  $$((RN^q)^{nr}-1)\nmid|t_i(i_1+(RN^q)^ri_2+\cdots+(RN^q)^{(n-1)r}i_n)|$$
  for $i=1,2$. This proves the above claim. Write $t=\frac{t_1}{t_2}$. Let $\Lambda'=t_2\Lambda_{\omega}$. We have $t\Lambda'=t_1\Lambda_{\omega}$. Therefore $\Lambda'$ and $t\Lambda'$ are spectra of $\mu_{M,D}$.
\end{proof}

\section{Examples}\label{5}
As an application of Theorem \ref{T1.2}, we give several examples in this section. Firstly, we have following theorem.

\begin{thm}
With the hypotheses of Theorem \ref{T1.1}, let $\Lambda$ be defined in \eqref{1.3}. If $t$ is a divisor of $R$, then $t^k\Lambda$ is a spectrum of $\mu_{M,D}$ for all non-negative integer $k$.
\end{thm}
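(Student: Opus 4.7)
My plan is to apply Theorem~\ref{T1.2} to $t^k$ in place of $t$. The case $k=0$ gives $t^0\Lambda=\Lambda$, which is already a spectrum by Theorem~\ref{T1.1}, so I may assume $k\geq 1$. The first hypothesis of Theorem~\ref{T1.2}, namely that $t^k$ is an integer coprime to $N$, is immediate: from $t\mid R$ and $\gcd(R,N)=1$ we obtain $\gcd(t,N)=1$, hence $\gcd(t^k,N)=1$.

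The substance of the proof is to rule out any word $I=i_1\cdots i_n\in\Sigma_L^{*}$ with $I\neq 0^n$ satisfying
$$((RN^q)^n-1)\,\bigm|\,t^k\bigl(i_1+(RN^q)i_2+\cdots+(RN^q)^{n-1}i_n\bigr).$$
First I would peel off $t^k$ using the fact that $t\mid R\mid RN^q$, which forces $(RN^q)^n\equiv 0\pmod{t}$ and hence $(RN^q)^n-1\equiv -1\pmod{t}$; consequently $\gcd((RN^q)^n-1,t^k)=1$, and the divisibility above reduces to $((RN^q)^n-1)\mid S$, where $S:=\sum_{j=1}^{n}(RN^q)^{j-1}i_j$.

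Next I would bound $S$ via the sharp estimate $L\subseteq\{0,1,\ldots,RN^q-2\}$. A direct computation gives the largest element of $L$ as
$$l_{\max}=\frac{RN^q}{N^{p_s+1}}\bigl((N-1)(N^{p_s}+N^{p_s-p_1}+\cdots+N^{p_s-p_{s-1}}+1)\bigr),$$
and hence
$$RN^q-l_{\max}=R\,N^{q-p_s-1}\Bigl(N^{p_s+1}-(N-1)\textstyle\sum_{i=0}^{s}N^{p_s-p_i}\Bigr).$$
The integer in parentheses is at least $1$, since $(N-1)\sum_{i=0}^{s}N^{p_s-p_i}\leq (N-1)(1+N+\cdots+N^{p_s})=N^{p_s+1}-1$. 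Combined with $R\geq 2$ and $q>p_s$, this yields $l_{\max}\leq RN^q-2$. Therefore each $i_j\in L$ satisfies $0\le i_j\le RN^q-2$, so
$$0\leq S\leq l_{\max}\cdot\frac{(RN^q)^n-1}{RN^q-1}\leq\frac{RN^q-2}{RN^q-1}\bigl((RN^q)^n-1\bigr)<(RN^q)^n-1.$$
Divisibility by $(RN^q)^n-1$ thus forces $S=0$, which in turn forces every $i_j=0$, contradicting $I\neq 0^n$. Theorem~\ref{T1.2} then yields the claim.

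The only mildly delicate step is the sharp inequality $l_{\max}\le RN^q-2$, and both hypotheses $R>1$ and $p_s<q$ are genuinely used here: without either, one would at best get $l_{\max}\le RN^q-1$, leaving a degenerate equality case in the final chain of estimates that would have to be handled separately. Under the stated assumptions the gap is at least $R\geq 2$, and everything collapses cleanly.
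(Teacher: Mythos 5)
Your proposal is correct and follows essentially the same route as the paper: both check $\gcd(t^k,N)=1$, use $t\mid R\mid RN^q$ to get $\gcd((RN^q)^n-1,t^k)=1$ and strip the factor $t^k$ from \eqref{1.4}, and then bound the weighted digit sum strictly below $(RN^q)^n-1$ so that the sufficiency direction of Theorem \ref{T1.2} applies. The only cosmetic difference is in the size estimate: the paper bounds $\max L\le RN^q-R$ via a geometric-series argument, while you compute the exact gap $RN^q-l_{\max}=RN^{q-p_s-1}\bigl(N^{p_s+1}-(N-1)\sum_{i=0}^{s}N^{p_s-p_i}\bigr)\ge R\ge 2$; these amount to the same bound.
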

\begin{proof}
It follows from $t$ is a divisor of $R$ and $gcd(R,N)=1$ that $gcd(t^k,N)=1$. Since $gcd((RN^q)^n-1,t)=1$ for all $n\in\mathbb{N}$, we have \eqref{1.4} is equivalent to
\begin{eqnarray}\label{5.1}
((RN^q)^n-1)|(i_1+(RN^q)i_2+\cdots+(RN^q)^{n-1}i_n)
\end{eqnarray}
where all $i_j\in L$.
For any  $i_j\in L$, we have
$$
i_1+(RN^q)i_2+\cdots+(RN^q)^{n-1}i_n\leq (RN^q-R)(1+RN^q+\cdots+(RN^q)^{n-1})<(RN^q)^n-1,
$$
which implies there is no $I=i_1i_2\dots i_n(I\neq 0^n)$ with all $i_j\in L$ such that \eqref{5.1} holds for all $n\in \mathbb{N}$. Hence, $t^k\Lambda$ is a spectrum of $\mu_{M,D}$ by Theorem \ref{T1.2}.
\end{proof}

\bigskip

\begin{ex}
Let $R=15$, $q=3$, $N=2$, then the self-similar measures $\mu_{M,D}$ generated by $M=RN^q=120$ and the product-form digit set
$D=\{0,1\}\oplus 2\{0,1\}\oplus 2^2\{0,1\}=\{0,1,2,3,4,5,6,7\}$ is a spectral measure with a spectrum
\begin{eqnarray*}
\Lambda=\left\{\sum_{j=1}^{k}120^{j-1}l_j:k\geq1 \text{ and all }  l_j\in \{0,15,30,45,60,75,90,105\} \right\}.
\end{eqnarray*}
Furthermore, for $t=3^k, 5^k, 15^k$ with $k\in\mathbb{N}$, $t\Lambda$ are also spectra of $\mu_{M,D}$.
\end{ex}
\begin{proof}
Suppose that $t=3^k (k\in\mathbb{N})$, it is easy to see that $gcd(t,N)=1$. By a simple calculation, we have $L=\{0,15,30,45,60,75,90,105\}$. Since $gcd(120^n-1,t)=1$ for all $n\in\mathbb{N}$, we have \eqref{1.4} is equivalent to
\begin{eqnarray}\label{5.2}
(120^n-1)|(i_1+120 i_2+ \cdots+120^{n-1}i_n),
\end{eqnarray}
where all $i_j\in L$.
For any  $i_j\in L$, we have
$$
i_1+120 i_2+ \cdots+120^{n-1}i_n\leq \frac{105}{119}(120^n-1)<120^n-1,
$$
which implies there is no $I=i_1i_2\dots i_n(I\neq 0^n)$ with all $i_j\in L$ such that \eqref{5.2} holds for all $n\in \mathbb{N}$. Hence, $t\Lambda$ is a spectrum of $\mu_{M,D}$ by Theorem \ref{T1.2}. Likewise, for the other two cases $t=5^k$ or $t=15^k$, $t\Lambda$ is also a spectrum of $\mu_{M,D}$.
\end{proof}

\begin{ex}
Let $R=3, N=2, q=4$, then the self-similar measures $\mu_{M,D}$ generated by $M=RN^q=48$ and $D=\{0,1\}\oplus 2^3 \{0,1\}=\{0,1,8,9\}$ is a spectral measure with a spectrum
\begin{eqnarray*}
\Lambda=\left\{\sum_{j=1}^{k}48^{j-1}l_j:k\geq1 \text{ and all }  l_j\in \{0,3,24,27\} \right\}.
\end{eqnarray*}
Besides, $3^k \Lambda (k\in\mathbb{N})$ are also spectra of $\mu_{M,D}$.
\end{ex}
\begin{proof}
Let $t=3^k$ for some non-negative integer $k$. It is easy to see that $gcd(t,N)=1$ and $L=\{0,3,24,27\}$. Since $gcd(48^n-1,t)=1$ for all $n\in\mathbb{N}$, we have \eqref{1.4} is equivalent to
\begin{eqnarray}\label{5.3}
(48^n-1)|(i_1+48 i_2+ \cdots+48^{n-1}i_n),
\end{eqnarray}
where all $i_j\in L$.
For any  $i_j\in L$, we have
$$
i_1+48 i_2+ \cdots+48^{n-1}i_n\leq \frac{27}{37}(48^n-1)<48^n-1,
$$
which implies there is no $I=i_1i_2\dots i_n(I\neq 0^n)$ with all $i_j\in L$ such that \eqref{5.3} holds for all $n\in \mathbb{N}$. Hence, $t\Lambda$ is a spectrum of $\mu_{M,D}$ by Theorem \ref{T1.2}.
\end{proof}

\begin{ex}
Let $R=3$, $N=q=2$, then the self-similar measures $\mu_{M,D}$ generated by $M=RN^q=12$ and $D=\{0,1\}\oplus 2 \{0,1\}=\{0,1,2,3\}$ is a spectral measure with a spectrum
\begin{eqnarray*}
\Lambda=\left\{\sum_{j=1}^{k}12^{j-1}l_j:k\geq1 \text{ and all }  l_j\in \{0,3,6,9\} \right\}.
\end{eqnarray*}
If $t$ is an odd number and $11|t$, then $t^k\Lambda$ is not a spectrum of $\mu_{M,D}$ for all non-negative integer $k$. Moreover, except for the number $11$, all positive odd numbers $t$ smaller than $20$ can make $t\Lambda$ a spectrum of $\mu_{M,D}$.
\end{ex}
\begin{proof}
Let $t$ be an odd number and $11|t$, then $(12-1)|(3t^k)$ and $L=\{0,3,6,9\}$.  By Theorem \ref{T1.2}, $t^k\Lambda$ is not a spectrum of $\mu_{M,D}$ for all non-negative integer $k$. It is easy to check $\#\{T(RN^q,tL)\cap\mathbb{Z}\}\leq \lfloor \frac{9t}{11} \rfloor +1$. With some calculations, for all odd numbers $t(t\neq 11,0<t<20)$, there is no $I=i_1i_2{\dots}i_n(I\neq{0^n})$ with all $i_j\in L$ such that \eqref{1.4} holds for all $n\leq\lfloor \frac{9t}{11} \rfloor +1$. Hence, $t\Lambda$ is a spectrum of $\mu_{M,D}$ by Theorem \ref{T1.2}.

\end{proof}

\end{document}